\newtheorem{thm}{Theorem}[section]
\newtheorem{cor}[thm]{Corollary}
\newtheorem{lem}[thm]{Lemma}
\newtheorem{prop}[thm]{Proposition}
\newtheorem{claim}{Claim}
\theoremstyle{definition}
\newcommand{\Z}{ \mathbb{Z}}
  \newcommand{\aut}{ \textrm{Aut}\,}
 \newcommand{\red}{\textrm{red}\,}
  \newcommand{\Hom}{\textrm{Hom}}
 \newcommand{\barV}{\overline{V}\,}
 \newcommand{\barW}{\overline{W}\,}
\title{The classification of Wada-type representations of braid groups}
\author{Tetsuya Ito}
\address{Graduate School of Mathematical Science, University of Tokyo, Japan}
\email{tetitoh@ms.u-tokyo.ac.jp}
\subjclass[2010]{Primary~20F36, Secondary~16T25}
\urladdr{http://ms.u-tokyo.ac.jp/~tetitoh}
\keywords{Braid groups, Wada-type representation, Set-theoretical Yang-Baxter equation, (bi)rack, (bi)quandle}
\begin{document}

\begin{abstract}
We give a classification of Wada-type representations of the braid groups, and solutions of a variant of the set-theoretical Yang-Baxter equation adapted to the free-product group structure. As a consequence, we prove Wada's conjecture: There are only seven types of Wada-type representations up to certain symmetries.
\end{abstract}
\maketitle
 
\section{Introduction}

Let $B_{n}$ be the braid group of $n$-strands defined by the presentation
\[ B_{n} = \left\langle \sigma_{1},\ldots,\sigma_{n-1} \: 
\begin{array}{|ll}
\sigma_{i}\sigma_{j}\sigma_{i}=\sigma_{j}\sigma_{i}\sigma_{j}, & |i-j|=1 \\
\sigma_{i}\sigma_{j}=\sigma_{j}\sigma_{i}, & |i-j|>1
\end{array}
 \right\rangle \]
and $\aut(F_{n})$ be the automorphism group of $F_{n}$, the free group of rank $n$.
Throughout the paper, we always consider {\em left} actions, hence we adapt the convention that $\aut(F_{n})$ acts on $F_{n}$ from {\em left}.

The braid group $B_{n}$ is naturally identified with $MCG(D_{n}, \partial D_{n})$, the relative mapping class group of the $n$-punctured disc $D_{n} = D^{2}-\{n \textrm {-points}\}$.
 The {\em Artin representation} is a homomorphism $\Phi: B_{n} \rightarrow \aut(F_{n})$ induced by the left action of $B_{n}$ on the group $\pi_{1}(D_{n}) = F_{n}$.
 
For a suitable choice of free generators $x_{1},\ldots,x_{n}$ of $F_{n}$, $\Phi$ is written as
\[ [\Phi(\sigma_{i})](x_{j}) = \left\{ \begin{array}{ll}
x_{i+1} & (j=i)\\
x_{i+1}^{-1}x_{i}x_{i+1} & (j=i+1)\\
x_{j} & (j \neq i,i+1).
\end{array}\right.\]

Now we consider a generalization of the Artin representation.
Let $\tau: F_{2} \rightarrow F_{2}$ be an automorphism of the free group of rank two generated by $x$ and $y$.
Such an automorphism is determined by $ \tau(x) = W(x,y) $ and $ \tau(y)=V(x,y)$, so we will write $\tau = \tau_{W,V}$ by using a pair of reduced words $(W=W(x,y),V=V(x,y))$ on $\{x^{\pm 1}, y^{\pm 1}\}$ .
For $1\leq i \leq n-1$, let $\tau_{i}$ be an automorphism of $F_{n}$ defined by
\[\tau_{i} = id_{F_{i-1}}*\tau*id_{F_{n-i-1}}: F_{n}= F_{i-1}*F_{2}*F_{n-i-1} \rightarrow F_{i-1}*F_{2}*F_{n-i-1}=F_{n}. \]
By using free generators $x_{1},\ldots,x_{n}$ of $F_{n}$, $\tau_{i}$ is given as
\[ \tau_{i}(x_{j}) = \left\{ \begin{array}{ll}
W(x_{i}, x_{i+1}) & (j=i)\\
V(x_{i}, x_{i+1}) & (j=i+1)\\
x_{j} & (j \neq i,i+1).
\end{array}\right.\]
 We say a representation of the braid group $\rho:B_{n} \rightarrow \aut(F_{n})$ is a {\em Wada-type representation} if $\rho(\sigma_{i})=\tau_{i}$ for some automorphism $\tau = \tau_{W,V}$. In this situation we say a pair of reduced words $(W,V)$ {\em defines} a Wada-type representation.
The Artin representation is a Wada-type representation defined by $(y,y^{-1}xy)$.

In \cite{w}, Wada initiated the study of Wada-type representations and gave a list of Wada-type representations by using computer experiments. Wada's motivation was to construct group-valued invariants of links, obtained by imitating the presentation of the link groups via the Artin representation \cite{b}, \cite{w}.
Recently, it is observed that Wada-type representations are closely related to (bi)racks and (bi)quandles, the algebraic objects encoding the combinatorics of (virtual) knot diagrams \cite{fjk}. Moreover, Wada's group invariants are generalized by Crisp-Paris \cite{cp} using general groups, and by the author \cite{i} using quandles. Thus, it is interesting to look for new examples of Wada-type representations.

The aim of this paper is to give a classification of Wada-type representations of the braid groups. We show that Wada's list in \cite{w} is complete, so modulo some natural symmetries, there are essentially only seven types of Wada-type representations. Actually we will classify more general objects: The solutions of a group-theoretical variant of the set-theoretical Yang-Baxter equation.

Recall that the set-theoretical Yang-Baxter equation is an equation in the monoidal category of sets, where the monoidal structure is given by the Cartesian product.
For a set $X$ and $R \in Map(X\times X, X\times X)$, the set-theoretical Yang-Baxter equation is an equation in $Map(X\times X \times X, X\times X\times X)$ given as
\[ R^{12}R^{13}R^{23} = R^{23}R^{13}R^{12} \]
where $R^{ij} \in Map(X\times X \times X, X\times X\times X)$ represents the $R$ action on $i$-th and $j$-th components of $X\times X\times X$.
 
Instead of the monoidal category of sets, we use the monoidal category of groups, where the monoidal structure is given by the free product.
For a group $G$ and $R \in Hom(G*G, G*G)$, we study the same equation 
\[ R^{12}R^{13}R^{23} = R^{23}R^{13}R^{12} \]
in $\Hom (G*G*G, G*G*G)$,
where $R^{ij} \in \Hom (G*G*G, G*G*G)$ represents the $R$ action on $i$-th and $j$-th components of $G*G*G$.
We call this equation the {\em free-product group-theoretical Yang-Baxter equation}, ({\em FGYBE}, in short).
We will classify the solutions of FGYBE for the case $G=\Z$, the infinite cyclic group.

By definition, $\tau \in \Hom(\Z*\Z,\Z*\Z) = \Hom(F_{2},F_{2})$ is a solution of FGYBE if and only if the equality
\[ (id * \tau)(\tau * id) (id* \tau) = (\tau * id) (id * \tau)(\tau * id).\]
holds in $\Hom(\Z*\Z*\Z,\Z*\Z*\Z) =\Hom(F_{3},F_{3})$.

For reduced words $W,V$ on $\{x^{\pm 1}, y^{\pm 1}\}$, let $\tau_{W,V} \in \Hom(F_{2})$ be a homomorphism defined by $\tau(x)= W(x,y)$, $\tau(y)= V(x,y)$. 
We say a pair of reduced words $(W,V)$ is a solution of FGYBE if $\tau_{W,V}$ is a solution of FGYBE. 
By definition $(W,V)$ defines a Wada-type representation if and only if $(W,V)$ is an invertible solution of FGYBE. 

Let $B_{3}^{+}$ be the 3-strand positive braid monoid. Then $(W,V)$ is a solution of FGYBE if and only if the map $\Phi: B_{3}^{+} \rightarrow \Hom(F_{3},F_{3})$ defined by $\Phi(\sigma_{1}) = \tau_{W,V} * id$, $\Phi(\sigma_{2}) = id * \tau_{W,V}$ is a monoid homomorphism.
In this point of view, it is convenient to express $\tau_{W,V}$ diagrammatically as in Figure \ref{fig:action}. 

\begin{figure}[htbp]
 \begin{center}
\SetLabels
(0.25*0.08) $x$\\
(0.7*0.08) $y$\\
(0.25*0.9) $W(x,y)$\\
(0.7*0.9) $V(x,y)$\\
  \endSetLabels
\strut\AffixLabels{\includegraphics*[scale=0.5, width=40mm]{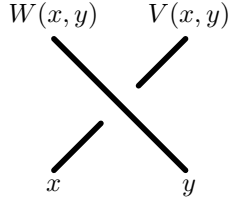}}
 \caption{Diagrammatic expression of $\tau_{W,V}$}
 \label{fig:action}
  \end{center}
\end{figure}

By direct computations, $(W,V)$ is a solution of FGYBE if and only if $(W,V)$ satisfies the following three conditions [T],[M] and [B].
\begin{align*}
\text{[T]} & \quad W(W(x,y),W(V(x,y),z)) = W(x,W(y,z)). \\
\text{[M]} & \quad V(W(x,y), W(V(x,y),z)) = W( V(x,W(y,z)), V(y,z)). \\
\text{[B]}  & \quad  V(V(x,y),z) = V(V(x,W(y,z)), V(y,z)).
\end{align*}
 These three conditions are expressed diagrammatically by Figure \ref{fig:axiom} where the conditions [T],[M], and [B] represent the compatibility conditions for the labellings on {\em top (left-most)}, {\em middle}, and {\em bottom (right-most)} strands respectively. 
 
 \begin{figure}[htbp]
 \begin{center}
\SetLabels
(0.01*0.05) $x$\\
(0.21*0.05) $y$\\
(0.4*0.05) $z$\\
(0.6*0.05) $x$\\
(0.79*0.05) $y$\\
(0.99*0.05) $z$\\
(0.02*0.34) $W(x,y)$\\
(0.21*0.34) $V(x,y)$\\
(0.4*0.34) $z$\\
(0.6*0.34) $x$\\
(0.79*0.34) $W(y,z)$\\
(0.99*0.34) $V(y,z)$\\
(0.02*0.64) $W(x,y)$\\
(0.21*0.64) $W(V(x,y),z)$\\
(0.41*0.64) $V(V(x,y),z)$\\
(0.6*0.64) $W(x,W(y,z))$\\
(0.8*0.64) $V(x,W(y,z))$\\
(0.99*0.64) $V(y,z)$\\
(-0.01*1.02) $W (W(x,y),$\\
(0.02*0.95) $W(V(x,y),z) )$\\
(0.19*1.02) $V (W(x,y),$\\
(0.22*0.95) $W(V(x,y),z) )$\\
(0.41*0.95) $V(V(x,y),z)$\\
(0.6*0.95) $W(x,W(y,z))$\\
(0.77*1.02) $W(V(x,W(y,z))$\\
(0.82*0.95) $,V(y,z))$\\
(0.99*1.02) $V(V(x,W(y,z))$\\
(1.04*0.95) $,V(y,z))$\\
  \endSetLabels
\strut\AffixLabels{\includegraphics*[scale=0.5, width=120mm]{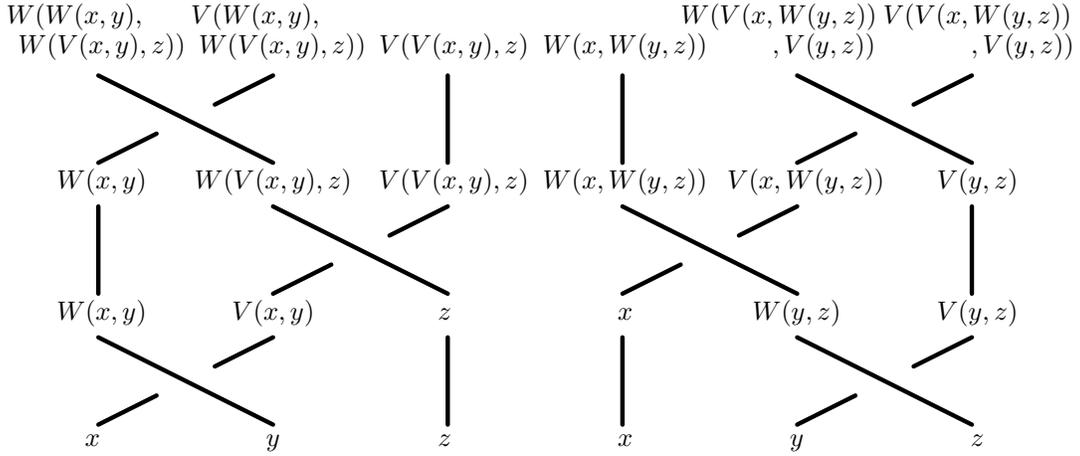}}
 \caption{Diagrammatic expression of free-product group-theoretical Yang-Baxter Equation}
 \label{fig:axiom}
  \end{center}
\end{figure}

There are two symmetries in solutions of FGYBE or Wada-type representations.
If $(W,V)$ is a solution of FGYBE, then the {\em dual solution} $(\barW,\barV)$, defined by
\[ \barW(x,y) = V(y,x),\;\;\; \barV(x,y)=W(y,x)\]
is also a solution of FGYBE. 
We remark that in \cite{w}, Wada mentioned that the dual solution is derived from the symmetry on the free group involution $\iota: F_{2} \rightarrow F_{2}$ given by $x \mapsto x^{-1}$ and $y \mapsto y^{-1}$, but it is not true. The dual solution is rather derived from the symmetry of the FGYBE under the reversal involution
$\textrm{rev}: \Z*\Z*\Z \rightarrow  \Z*\Z*\Z$ 
 given by $(x,y,z) \mapsto (z,y,x)$. 
In fact, the dual solution still exists for the similar Yang-Baxter equation in the monoidal category of monoids with monoidal structure given by the free product.

For a Wada-type representation there is an additional symmetry. Let $(W^{r},V^{r})$ be a pair of reduced words such that $\tau_{W,V}^{-1} = \tau_{W^{r}, V^{r}}$. Then $(W^{r},V^{r})$ also defines a Wada-type representation, hence it is also a solution of FGYBE. We call this solution $(W^{r},V^{r})$ the {\em inverse solution} of $(W,V)$: This symmetry is derived from the involution $\iota: B_{3} \rightarrow B_{3}$ defined by $\iota(\sigma_{i}) = \sigma_{i}^{-1}$.

Now we state the main result of this paper. We will denote the empty word by $1$.

\begin{thm}[Classification of the solutions of FGYBE]
\label{thm:main}
A pair of reduced words $(W,V)$ on $\{x^{\pm1}, y^{\pm 1}\}$ is a solution of FGYBE if and only if $(W,V)$ or its dual appear in the following list.
\begin{enumerate}
\item $(W,V)=(1,x^{m})$ $(m \in \Z)$.
\item $(W,V)=(1,y)$.
\item $(W,V)=(1,xy)$.
\item $(W,V)=(x,x^{m})$ $(m \in \Z)$.
\item $(W,V)=(x,y)$.
\item $(W,V)=(y,x^{-1})$.
\item $(W,V)=(y,y^{-m}xy^{m})$ $(m \in \Z)$.
\item $(W,V)=(y, yx^{-1}y)$.
\item $(W,V)=(y^{-1},x^{-1})$.
\item $(W,V)=(y^{-1},yxy)$.
\item $(W,V)=(xy^{-1}x^{-1},xy^{2})$
\item $(W,V)=(x^{-1}y^{-1}x, y^{2}x)$.
\item $(W,V)=(y^{s},x^{m})$ $(s,m \in \Z)$.
\end{enumerate}
\end{thm}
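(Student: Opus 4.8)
The plan is to establish the two directions separately, with nearly all of the work in the ``only if'' direction. For the ``if'' direction I would substitute each of the thirteen pairs $(W,V)$ into the identities [T], [M], [B] and verify them directly; this is a finite, mechanical computation, and the dual symmetry $(W,V)\mapsto(\barW,\barV)$ recorded above (with $\barW(x,y)=V(y,x)$, $\barV(x,y)=W(y,x)$) roughly halves the list one must treat. For ``only if'', the strategy is first to constrain the exponent sums of $W$ and $V$ by abelianizing, and then to pin down the reduced words themselves by feeding partial specializations of [T], [M], [B] back into the free group.

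\textbf{Abelianization.} Writing the exponent-sum vectors of $W$ and $V$ as the columns of $M=\begin{pmatrix} a & c\\ b& d\end{pmatrix}\in M_{2}(\Z)$, the homomorphism $\tau_{W,V}$ induces $M$ on $F_{2}^{\mathrm{ab}}=\Z^{2}$, and FGYBE forces the matrix identity $(1\oplus M)(M\oplus 1)(1\oplus M)=(M\oplus 1)(1\oplus M)(M\oplus 1)$ in $M_{3}(\Z)$. Expanding the two products and comparing entries yields the system
\[ a(a+bc-1)=0,\quad d(d+bc-1)=0,\quad acd=0,\quad abd=0,\quad ad(a-d)=0.\]
A short case analysis on whether $a$ and $d$ vanish reduces this to four families: (A) $a=d=0$ with $b,c$ arbitrary; (B) $d=0$, $a\neq 0$, $bc=1-a$; (C) the dual of (B), with $a=0$, $d\neq 0$, $bc=1-d$; and (D) $M=I$. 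By the dual symmetry it suffices to analyze (A), (B) and (D), recovering (C) for free.

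\textbf{Non-abelian refinement.} The engine here is to specialize one variable to $1$ in the three identities, which collapses the nested substitutions into clean power identities in the free group. For instance, setting $x=1$ in [T] gives $W(y^{b},W(y^{d},z))=W(y,z)^{b}$, which in family (A) becomes $W(y^{b},z^{b})=W(y,z)^{b}$; and, again in family (A), setting $z=1$ in [B] gives $V(x^{c},y^{c})=V(x,y)^{c}$. I would then prove the key lemma that for $|n|\geq 2$ an identity $U(s^{n},t^{n})=U(s,t)^{n}$ in $F_{2}$ forces $U$ to be a power of a single generator: a cyclically reduced word of syllable length $\geq 2$ has its cyclic length multiplied by $|n|$ under the $n$-th power but left unchanged under $s\mapsto s^{n},\,t\mapsto t^{n}$, and cyclic length is a conjugacy invariant, so the two sides cannot agree. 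Combined with the uniqueness of roots in free groups (to upgrade conjugacy to equality and fix the conjugating factor), this forces $W=y^{b}$ whenever $|b|\geq 2$ and $V=x^{c}$ whenever $|c|\geq 2$, disposing of the ``large exponent'' part of each family and producing the parametrized entries (1), (4) and (13).

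\textbf{Main obstacle.} The genuine difficulty is the low-exponent regime, where all relevant exponents lie in $\{0,\pm 1\}$ and the power lemma gives nothing; this is exactly where the conjugated and sporadic solutions (7), (8), (10), (11) and (12) live, and also where one must rule out nontrivial commutator corrections in family (D). Here I expect to argue directly with the full identities, tracking the first and last letters of $W$ and $V$ together with the cancellation forced by the nested substitutions, and using [M] to couple the two words once their gross shape is known. The crux is a boundedness statement --- that a solution in this regime cannot have arbitrarily large syllable length --- after which a finite search completes the classification. I anticipate this cancellation bookkeeping, rather than any single clever identity, to be the principal labor of the proof.
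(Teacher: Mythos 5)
Your ``if'' direction and your abelianization step are both sound (the matrix system you display is correct and does yield the four families), and your power lemma --- that $U(s^{n},t^{n})=U(s,t)^{n}$ with $|n|\geq 2$ forces $U$ to be a power of a single generator, via cyclic syllable length plus uniqueness of roots --- is true and provable along the lines you indicate. But the proposal has a genuine gap: everything that is actually hard in this theorem sits inside the step you label ``Main obstacle'' and then defer. The low-exponent regime is not a residual finite check; it contains all of the solutions (5)--(12), including the infinite family (7), $(y,y^{-m}xy^{m})$, whose members all share one abelianization. Your anticipated ``boundedness statement'' is never proven, and even granting it, a ``finite search'' cannot complete the classification, because the exponents inside the syllables remain unbounded: one must solve parametric word equations, e.g.\ determine for which $p,q$ the pair $(x^{p}yx^{q},x)$ solves FGYBE, which is exactly how the family $W\equiv x^{p}yx^{-p}$ and the sporadic solutions (11), (12) emerge. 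There is also a smaller gap in your large-exponent step: in the mixed case (say $|b|\geq 2$ but $|c|\leq 1$) the power lemma constrains $W$ only, and showing that $W=y^{b}$ forces $V=x^{m}$ (entry (13)) still requires a cancellation argument you have not supplied.

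For comparison, the paper spends essentially all of its length on precisely this regime, and its key tool is not exponent bounding but a structural fact (Lemma \ref{lem:free}): for a solution in which $W$ contains some $y^{\pm 1}$ and $V$ contains some $x^{\pm 1}$, the elements $W$ and $V$ generate a rank-two free subgroup of $F_{2}$. This is what guarantees that the designated $z^{\pm 1}$ blocks created by the nested substitutions can never cancel, which via Lemma \ref{lem:fund} converts [T], [M], [B] into tractable equations such as $W^{p}V^{p}=x^{p}y^{p}$, and ultimately into the shape constraint $W\equiv x^{p}y^{\pm 1}x^{q}$, $V\equiv y^{r}x^{\pm 1}y^{s}$ with $pqrs=0$ (Proposition \ref{prop:generic}); only then does a case analysis, itself still requiring word-equation arguments of the type in Claim \ref{claim:eqn}, finish the classification. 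To turn your outline into a proof you would need to supply a result playing the role of Lemma \ref{lem:free}, or some equivalent mechanism making your cancellation bookkeeping and boundedness claim precise; as written, the proposal reduces the theorem to an unproved assertion that is the theorem's actual content.
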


In the above list, the solutions (5)--(12) and (13) for $s, m \in \{ \pm 1\}$ are invertible. For each solution (5)--(10), the inverse solution is equal to its dual solution (For (5) and (9), the dual solution is equal to the original solution). The inverse solution of (11) is the dual solution of (12) and vice versa. 
Thus, if we restrict our attention to Wada-type representations, we get the same list of Wada-type representations in \cite{w}:

\begin{cor}[Classification of Wada-type representation]
Up to the dual and the inverse symmetries, there are seven types of Wada-type representations, listed as (5)--(11) in Theorem \ref{thm:main}.
\end{cor}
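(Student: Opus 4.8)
The plan is to derive the Corollary directly from Theorem~\ref{thm:main} by keeping only the \emph{invertible} solutions and then partitioning them into orbits under the group generated by the dual and inverse involutions. Recall from the discussion above that $(W,V)$ defines a Wada-type representation exactly when $\tau_{W,V}\in\aut(F_{2})$, i.e.\ when $(W,V)$ is an invertible solution of the FGYBE. Since Theorem~\ref{thm:main} exhibits every solution as one of $(1)$--$(13)$ or the dual of one of these, the first step is to decide, for each entry, whether $\tau_{W,V}$ is an automorphism, which I would do by exhibiting the inverse where it exists: for instance $\tau_{x,y}=\mathrm{id}$; the pair $(6)$, $x\mapsto y,\ y\mapsto x^{-1}$, has inverse $x\mapsto y^{-1},\ y\mapsto x$; and the conjugation family $(7)$, $x\mapsto y,\ y\mapsto y^{-m}xy^{m}$, is inverted by $x\mapsto x^{m}yx^{-m},\ y\mapsto x$. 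Carrying this out confirms the assertion recorded just before the statement, that the invertible entries are precisely $(5)$--$(12)$ together with $(13)$ for $s,m\in\{\pm1\}$. A key observation here is that the four invertible specializations of $(13)$, namely $(y,x)$, $(y,x^{-1})$, $(y^{-1},x)$, $(y^{-1},x^{-1})$, coincide respectively with $(7)$ at $m=0$, with $(6)$, with the dual of $(6)$, and with $(9)$; hence the two-parameter family $(13)$ contributes no new invertible pairs.

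Second, I would record that both symmetries act on the set of Wada-type pairs, i.e.\ preserve invertibility. For the inverse this is immediate. For the dual a short computation gives $\tau_{\barW,\barV}=\kappa\,\tau_{W,V}\,\kappa$, where $\kappa\in\aut(F_{2})$ is the generator-swap $x\leftrightarrow y$, so $\tau_{\barW,\barV}$ is an automorphism precisely when $\tau_{W,V}$ is. This conjugation by the factor-swap is the single-tensor shadow of the reversal symmetry emphasised in the text, and in particular is unrelated to the inversion $x\mapsto x^{-1},\ y\mapsto y^{-1}$.

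Finally, I would compute the orbits. Using the relations stated just before the Corollary --- that $(5)$ and $(9)$ are fixed by both symmetries, that for each of $(6)$, $(7)$, $(8)$, $(10)$ the inverse coincides with the dual, and that the inverse of $(11)$ is the dual of $(12)$ and conversely --- the invertible solutions fall into exactly seven orbits: the singletons $\{(5)\}$ and $\{(9)\}$; the two-element orbits generated by $(6)$, by $(8)$, and by $(10)$, each consisting of the pair and its dual; the family orbit generated by $(7)$, comprising the pair, its dual, and all values of $m$; and the four-element orbit $\{(11),(12),\text{ and their duals}\}$, linked by $\text{inverse}(11)=\text{dual}(12)$. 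Choosing representatives $(5)$--$(11)$ yields the seven types asserted. The step demanding genuine care --- and the only real obstacle once Theorem~\ref{thm:main} is granted --- is this last reconciliation: computing each inverse automorphism explicitly, matching it to the dual, and checking that the families $(7)$ and $(13)$ and the linked pair $(11),(12)$ collapse as claimed, so that no eighth type hides among the duals or the invertible specializations of $(13)$.
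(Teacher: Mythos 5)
Your proposal is correct and follows essentially the same route as the paper, which derives the Corollary from Theorem~\ref{thm:main} together with the remarks immediately preceding the Corollary (invertibility of (5)--(12) and of (13) for $s,m\in\{\pm1\}$, the coincidence of inverse and dual for (5)--(10), and the linking of (11) with (12)). Your additional verifications --- the explicit inverse automorphisms, the identity $\tau_{\barW,\barV}=\kappa\,\tau_{W,V}\,\kappa$ showing the dual preserves invertibility, and the absorption of the invertible specializations of (13) into (6), (7), (9) and their duals --- correctly flesh out what the paper leaves as assertion.
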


We also remark that $\tau_{W,V}:F_{2} \rightarrow F_{2}$ is induced from a homeomorphism of two-punctured disc if and only if $\tau_{W,V}(xy) = xy$. This condition is satisfied for (5),(7) for $m=1$, (10), and (11).
\\

\textbf{Acknowledgments.}
 This research was supported by JSPS Research Fellowships for Young Scientists. The author would like to thank his supervisor Toshitake Kohno for a helpful conversation. 
He also would like to thank the referee whose comments led the author to recognize the importance of Lemma \ref{lem:free} which was implicit in the previous version without proof.

\section{Classification}

Our proof of the classification theorem is rather elementary and combinatorial, and is based on a careful analysis on cancellations of subwords.

A word on $\{x_{1}^{\pm 1},\ldots, x_{n}^{\pm 1}\}$ defines an element of $F_{n}$, the rank $n$ free group generated by $\{x_{1},\ldots,x_{n}\}$. To distinguish elements of $F_{n}$ and representing words, for two words $W$ and $V$ we write $W \equiv V$ if they are the same as words and write by $W=V$ if they represent the same element of $F_{n}$. 
A word $W$ is {\em reduced} if $W$ contains no subwords of the form $x_{i}^{\pm 1} x_{i}^{\mp1}$. An operation deleting the subword of the form $x_{i}^{\pm 1} x_{i}^{\mp1}$ is called a {\em reducing operation}.

By applying reducing operations repeatedly, every word $W$ is changed to the unique reduced word $\red(W)$ which satisfies $W=\red(W)$. We say a subword $A$ in $W$ is {\em not canceled} in $W$ if for any sequences of reducing operations 
\[ W \rightarrow W_{1} \rightarrow \cdots \rightarrow \red(W) \]
each reducing operation $W_{i} \rightarrow W_{i+1}$ preserves the subword $A$. That is, each reducing operation does not involve the letters in the subword $A$.

In our proof of the classification theorem, we will frequently use the following simple fact.

\begin{lem}
\label{lem:fund}
Let $W$ and $V$ be words on $\{x_{1}^{\pm 1},\ldots, x_{n}^{\pm 1}\}$ such that $W=V$. 
Let $A \equiv x_{1}^{\pm 1}\cdots $ be a common subword in $W$ and $V$. Thus, $W \equiv W' A W''$ and $V \equiv V' A V''$  for some words $W',W'',V'$ and $V''$. 
Assume that $W'$ and $V'$ contain no $x_{1}^{\pm 1}$ and a subword $A$ is not canceled in both $W$ and $V$. Then $W' = V'$.
\end{lem}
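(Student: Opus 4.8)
The plan is to reduce everything to a statement about unique reduced words in $F_{n}$ and then to locate the first occurrence of the letter $x_1^{\pm 1}$. Since $W=V$ as elements of $F_{n}$, their reduced forms coincide as words, $\red(W) \equiv \red(V)$; the whole argument will extract $W'=V'$ from this single identity of reduced words.

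First I would show that the hypotheses force $\red(W) \equiv \red(W')\,A\,\red(W'')$. The key inputs are that $A$ is not canceled in $W$ and that $A \equiv x_1^{\pm 1}\cdots$ begins with $x_1^{\pm 1}$ while $W'$ contains no $x_1^{\pm 1}$. Because $A$ is not canceled, every reducing operation in any reduction of $W$ leaves the letters of $A$ untouched; in particular $A$ is itself reduced, and all reductions take place entirely inside $W'$ or inside $W''$, never across the two junctions bordering $A$. Thus reducing $W'$ and $W''$ independently yields $\red(W')\,A\,\red(W'')$, and this word is already reduced: a further reduction could only occur at a junction with $A$, which is excluded. The same reasoning gives $\red(V) \equiv \red(V')\,A\,\red(V'')$.

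Next I would pin down the position of the first $x_1^{\pm 1}$ in the common reduced word $U \equiv \red(W) \equiv \red(V)$. In the factorization $U \equiv \red(W')\,A\,\red(W'')$ the prefix $\red(W')$ contains no $x_1^{\pm 1}$ (it is a reduction of the $x_1$-free word $W'$), whereas the first letter of $A$ is $x_1^{\pm 1}$. Hence the first $x_1^{\pm 1}$ in $U$ occurs exactly at position $|\red(W')|+1$. Reading the other factorization the same way, the first $x_1^{\pm 1}$ in $U$ occurs at position $|\red(V')|+1$. Since this position is an invariant of $U$, we get $|\red(W')| = |\red(V')|$, so the two equal-length prefixes of $U$ agree as words: $\red(W') \equiv \red(V')$. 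Therefore $W' = \red(W') = \red(V') = V'$ in $F_{n}$, as desired.

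The one step that needs genuine care — the main obstacle — is the junction analysis in the second paragraph: extracting the clean factorization $\red(W)\equiv \red(W')\,A\,\red(W'')$ from the definition of ``not canceled,'' which quantifies over all reduction sequences. The crucial point making this go through is that the first letter of $A$ being $x_1^{\pm 1}$, together with $W'$ being $x_1$-free, rules out any cancellation at the $W'$--$A$ junction for purely letter-type reasons, while the $A$--$W''$ junction is handled by the non-cancellation hypothesis; without the $x_1$-freeness of $W'$ the location of the first $x_1^{\pm 1}$ would not determine $\red(W')$.
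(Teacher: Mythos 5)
Your proof is correct. One thing to note at the outset: the paper itself gives \emph{no} proof of Lemma~\ref{lem:fund} --- it is introduced as a ``simple fact'' and then used freely --- so there is no argument of the author's to compare yours against; what you have written fills that gap, and it is the natural argument. Your two steps are the right ones: (i) the non-cancellation hypothesis forces the factorization $\red(W) \equiv \red(W')\, A\, \red(W'')$, and (ii) uniqueness of reduced forms plus locating the first $x_1^{\pm 1}$ in $\red(W) \equiv \red(V)$ pins down $\red(W') \equiv \red(V')$, hence $W' = V'$ in $F_n$. The only place where your write-up compresses a genuine argument is the claim that $\red(W')\,A\,\red(W'')$ admits no reduction at the $A$--$W''$ junction, ``which is excluded'': to make this precise, observe that the partial reduction sequence you constructed (reduce $W'$, then $W''$, then perform the hypothetical junction cancellation) extends to a complete reduction sequence of $W$ ending at $\red(W)$, and that sequence would contain a step consuming a letter of $A$ --- contradicting the definition of ``not canceled,'' which quantifies over \emph{all} reduction sequences. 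You flag exactly this point as the delicate one and your mechanism for it is sound; the letter-type argument at the $W'$--$A$ junction (last letter of $\red(W')$ is not $x_1^{\pm 1}$, first letter of $A$ is) is likewise correct, and indeed it is this asymmetry that makes the position of the first $x_1^{\pm 1}$ a usable invariant in step (ii).
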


Now we start to prove the classification theorem. 
Let $W=W(x,y)$ and $V=V(x,y)$ be reduced words on $\{x^{\pm 1}, y^{\pm 1}\} $.
First we analyze a generic case: we assume that $W(x,y)$ contains at least one $y^{\pm 1}$ and $V(x,y)$ contains at least one $x^{\pm 1}$. The remaining case, the case $W(x,y) \equiv x^{m}$ or $V(x,y) = y^{m}$ $(m \in \Z)$ will be treated later.

In the analysis of a generic case, the following lemma plays an important role.
This lemma will be used to show certain subwords never cancel, and allows us to apply Lemma \ref{lem:fund}.

\begin{lem}
\label{lem:free}
Let $(W,V)$ be a solution of FGYBE. Assume that $W(x,y)$ contains at least one $y^{\pm 1}$ and $V(x,y)$ contains at least one $x^{\pm 1}$. Then $W$ and $V$ generates the free group of rank two.
\end{lem}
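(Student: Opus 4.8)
The plan is to prove the lemma by showing that $W$ and $V$ do \emph{not} commute. Since every two-generator subgroup of a free group is itself free, and a free group of rank two cannot be generated by two commuting elements, the non-commutativity of $W,V$ is equivalent to the assertion that $\langle W,V\rangle$ is free of rank two with basis $\{W,V\}$ (the surjection $F_{2}\to\langle W,V\rangle$, $x\mapsto W$, $y\mapsto V$, is then an isomorphism by Hopficity). Recall that in a free group two elements commute if and only if they are common powers of a single element. So I would argue by contradiction: suppose $W=u^{p}$ and $V=u^{q}$ for some $u\in F_{2}$ and $p,q\in\Z$. The hypotheses immediately remove the degenerate parameters: if $p=0$ then $W=1$ contains no $y^{\pm1}$, and if $q=0$ then $V=1$ contains no $x^{\pm1}$, so $p,q\neq0$; likewise $u$ must genuinely involve both letters (otherwise $W$ or $V$ would be a power of a single generator), so the reduced word $u$ has $j\ge1$ occurrences of $x^{\pm1}$ and $k\ge1$ occurrences of $y^{\pm1}$.

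First I would extract numerical constraints by applying the exponent-sum homomorphisms $a_{x},a_{y},a_{z}\colon F_{3}\to\Z$ to conditions [T] and [B]; because both sides are literally equal in $F_{3}$, all exponent sums must agree, giving polynomial identities in $p,q$ and in $\alpha=a_{x}(u)$, $\beta=a_{y}(u)$. A short computation (the coefficient of $y$ in [T] reads $p\alpha\beta=p\alpha\beta(1+q\beta)$) forces $\alpha\beta=0$; after replacing $(W,V)$ by its dual if necessary I may assume $\beta=0$, and then the $x$-coefficient of [T] gives $p\alpha=1$, i.e. $\tau_{W,V}(u)=u$ with $p=\alpha=\pm1$. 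This step is routine, but it cannot finish the proof on its own: when $\alpha=\beta=0$ (for instance $u=xyx^{-1}y^{-1}$) the abelianization is blind to the obstruction, so a genuinely non-abelian argument is unavoidable.

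The heart of the matter is a length/normal-form analysis of condition [T]. Since $F_{3}$ has unique roots and $p\neq0$, condition [T] is equivalent to
\[ u\bigl(x,\,u(y,z)^{p}\bigr)=u\bigl(u^{p},\,u(u^{q},z)^{p}\bigr). \]
I would read both sides in the free-product decomposition $F_{3}=\langle x,y\rangle*\langle z\rangle$. Every occurrence of $z$ sits in a pocket created by substitution into the $y$-slots of the outer $u$, and because $z$ lies in a free factor disjoint from $\langle x,y\rangle$ these pockets are never canceled; this is exactly the situation governed by Lemma~\ref{lem:fund}, with the letter $z$ as the non-canceled marker. If the two sides were equal, their scaffolds of maximal $z$-syllables would coincide and the words of $\langle x,y\rangle$ lying between consecutive $z$-syllables would have to match one by one. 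But these intermediate blocks differ: on the left each block is built only from single generators $x^{\pm1}$ and powers of $y$, whereas on the right at least one block contains a full copy of $u^{\pm p}$ coming from an $x$-slot of the outer $u$, and $u$ has $\langle x,y\rangle$-length at least two and carries genuine $x$-content since $j\ge1$. Hence the right-hand side cannot reduce to the left-hand side.

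The main obstacle is making this last comparison cancellation-proof: a priori the many copies of $u^{\pm p}$ and $u^{\pm q}$ on the right might telescope and shrink the intermediate blocks down to the size of those on the left. I would control this by first conjugating $u$ to be cyclically reduced, so that the powers $u^{\pm p},u^{\pm q}$ are themselves reduced, and then using the $z$-separators to localize the analysis: since consecutive pockets are shielded by $z$, no cancellation can propagate across a pocket, so the $x$-bearing contributions of distinct $x$-slots of the outer $u$ cannot annihilate one another. This confines all possible cancellation to the two boundaries of each $\langle x,y\rangle$-block and leaves at least one block whose length (indeed whose count of $x^{\pm1}$) strictly exceeds that of its counterpart on the left, contradicting uniqueness of the normal form. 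This contradiction shows that $W$ and $V$ cannot be common powers, hence they do not commute, and $\langle W,V\rangle$ is free of rank two.
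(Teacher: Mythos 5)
Your overall route is the same as the paper's: reduce to the case $W=u^{p}$, $V=u^{q}$ with $p,q\neq 0$ and $u$ a reduced word involving both letters, then derive a word identity from the Yang--Baxter conditions and refute it by a non-cancellation and letter-counting analysis in $F_{3}$. (The paper combines [T], [M] and [B] into the single equation (\ref{eqn:cyc}), while you take a $p$-th root of [T] alone; that difference is harmless.) The fatal gap is the sentence ``I would control this by first conjugating $u$ to be cyclically reduced.'' You cannot: $u$ is, up to inversion, the common root of $W$ and $V$, hence determined by the pair $(W,V)$, and replacing $u$ by a conjugate changes $(W,V)$ into a pair that need not satisfy the FGYBE in either direction --- conjugation is not a symmetry of these equations (for instance $(x,y)$ is a solution, but its conjugate $(yxy^{-1},y)$ already violates [T]). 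Worse, the non-cyclically-reduced case is exactly where your block comparison breaks down: if $u\equiv y^{a}x^{c}y^{-a}$, then every power of $u$ has the form $y^{a}x^{cn}y^{-a}$, which is precisely the shape ($y$-power, $x$-power, $y$-power) of the $\langle x,y\rangle$-blocks produced by your left-hand side, so no contradiction can be read off from the shapes of the blocks. Proving that the derived identity forces the root to be cyclically reduced is the real content of the lemma; it is what the paper's Claim \ref{claim:cyc} does, and that claim occupies the bulk of the paper's proof. Your proposal has no substitute for it.

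A secondary weakness: even granting $u$ cyclically reduced, your closing justification (``at least one block contains a full copy of $u^{\pm p}$, hence the right side cannot reduce to the left'') is not by itself conclusive, because left-hand blocks $y^{a}x^{c}y^{b}$ can also have length at least two and genuine $x$-content, and for short $u$ (say $u\equiv yxy$ or $u\equiv xy$) nonzero powers of $u$ have exactly that form. Unlike the first gap, this one is fixable by the count you allude to: for cyclically reduced $u$ both sides of $u(x,u(y,z)^{p})=u(u^{p},u(u^{q},z)^{p})$ are reduced as written, and counting occurrences of $x^{\pm 1}$ gives $a=a^{2}|p|(1+b|q|)$, where $a,b\geq 1$ denote the numbers of $x^{\pm1}$ and $y^{\pm1}$ in $u$; this is impossible since the right side is at least $2a^{2}>a$. (The paper's version of this count, applied to (\ref{eqn:cyc}), yields $a=b=1$ and then disposes of the residual words $x^{\pm1}y^{\pm1}$, $y^{\pm1}x^{\pm1}$ by direct check.) So your counting endgame can be made rigorous, but the cyclically-reduced step on which it rests is missing, and that step is where the work lies.
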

\begin{proof}
Assume that $W$ and $V$ do not generate a free group of rank two. Since $W$ and $V$ are non-trivial elements, the subgroup generated by $W$ and $V$ is an infinite cyclic group generated by a reduced word $A(x,y)$.
Thus $W(x,y)=A(x,y)^{w}$ and $V(x,y)=A(x,y)^{v}$ for some non-zero integers $w,v \in \Z$.
Since we have assumed that $W$ contains at least one $x^{\pm 1}$ and $V$ contains at least one $y^{\pm1}$, the reduced word $A(x,y)$ contains both $x^{\pm 1}$ and $y^{\pm 1}$.

In this situation the three conditions [T],[M] and [B] are written as 
\begin{align*}
\text{[T]} & \quad A( A(x,y)^{w},A(A(x,y)^{v},z)^{w})^{w} = A(x,A(y,z)^{w})^{w}. \\
\text{[M]} & \quad A(A(x,y)^{w}, A(A(x,y)^{v},z)^{w})^{v} = A(A(x,A(y,z)^{w})^{v}, A(y,z)^{v})^{w}. \\
\text{[B]}  & \quad  A(A(x,y)^{v},z)^{v} = A(A(x,A(y,z)^{w}), A(y,z)^{v})^{v}.
\end{align*}

In the free group $F_{3}$, the equation $f^{n}=g^{n} (n\neq 0)$ implies $f=g$, hence by [T] and 
[B], we get
\[ A(A(x,y)^{w}, A(A(x,y)^{v},z)) = A(x,A(y,z)^{w}) \]
and
\[ A(A(x,y)^{v},z) =  A(A(x,A(y,z)^{w}), A(y,z)^{v}). \]
Therefore by [M] we get an equality
\begin{equation} 
\label{eqn:cyc}
A(x,A(y,z)^{w})^{v}= A(A(x,y)^{v},z)^{w}.
\end{equation}

The key observation is the following.
\begin{claim}
\label{claim:cyc}
If a reduced word $A(x,y)$ satisfies the equation {\rm (\ref{eqn:cyc})}, then the word
$A(x,y)$ is cyclically reduced.
\end{claim}
\begin{proof}
Let us write 
\[ A(x,y) \equiv R(x,y) C(x,y) R(x,y)^{-1}\]
 where $C(x,y)$ is cyclically reduced. We show $R(x,y)$ must be the empty word.
Let $m$ be the number of letters $x^{\pm 1}$ appearing in $R(x,y)$.
Observe that
\begin{eqnarray*}
A(x, A(y,z)^{w})^{v}\!\! & \! = & \! R(x, A(y,z)^{w})C(x, A(y,z)^{w})^{v}R(x, A(y,z)^{w})^{-1} \\
\!\! &\! =  & \!R(x, A(y,z)^{w}) C(x, R(y,z)C(y,z)^{w}R(y,z)^{-1})^{v} R(x, A(y,z)^{w})^{-1}.
\end{eqnarray*}

Let us study the reducing procedure of $C(x, R(y,z)C(y,z)^{w}R(y,z)^{-1})^{v}$.
Since $C(x,y)$ is cyclically reduced, a cancellation can occur only between the letters appearing in the subwords $R(y,z)$ and $R(y,z)^{-1}$: all cancellations are regarded as a subword cancellation of the form
\[ R(y,z)^{\pm 1} R(y,z)^{\mp 1} \rightarrow 1. \]
Thus unless $C(x,y) \equiv y^{\pm 1} \cdots y^{\pm 1} $, $\red( C(x, R(y,z)C(y,z)^{w}R(y,z)^{-1})^{v} )$ is cyclically reduced. 
If $C(x,y) \equiv y^{\pm 1} \cdots y^{\pm 1} $, then 
\[ \red (C(x, R(y,z)C(y,z)^{w}R(y,z)^{-1})^{v}) \equiv R(y,z)\underline{C(y,z)^{\pm w} \cdots C(y,z)^{\pm w}} R(y,z)^{-1}\]
and the underlined subword $\underline{C(y,z)^{\pm w} \cdots C(y,z)^{\pm w}} $ is cyclically reduced.
So we conclude that if we write 
\[ \red (A(x, A(y,z)^{w})^{v}) \equiv R_{1} C_{1}R_{1}^{-1}\]
 where $C_{1}$ is a cyclically reduced subword, then $R_{1} = R(x, A(y,z)^{w})$ or $R_{1} = R(x, A(y,z)^{w})R(y,z)$. The latter case occurs only if $C(x,y) \equiv y^{\pm 1} \cdots y^{\pm 1} $.
In particular, the number of $x^{\pm 1}$ in $R_{1}$ is $m$.

By similar arguments for $A(A(x,y)^{v},z)^{w}$, if we write 
\[ \red (A(A(x,y)^{v},z)^{w}) \equiv R_{2} C_{2}R_{2}^{-1}\]
 where $C_{2}$ is a cyclically reduced subword, then $R_{2} = R(A(x,y)^{v}, z)$ or $R_{2} = R(A(x,y)^{v}, z)R(x,y)$. The latter case occurs only if $R(x,y) \equiv x^{\pm 1} \cdots x^{\pm 1}$.

By equation (\ref{eqn:cyc}), $R_{1} C_{1}R_{1}^{-1} \equiv R_{2} C_{2}R_{2}^{-1}$ so $R_{1} \equiv R_{2}$ and $C_{1} \equiv C_{2}$. We show $m=0$ by counting the number of $x^{\pm 1}$ in the word $R_{2}$.

First we treat the case $R_{2} = R(A(x,y)^{v}, z)$. Assume that $m \neq 0$, so $R(x,y)$ is written as $R(x,y) \equiv \cdots x^{a} \cdots $ $(a \neq 0)$.
Then 
\[ R_{2} \equiv \red( R(A(x,y)^{v},z) ) \equiv \cdots R(x,y)C(x,y)^{av}R(x,y)^{-1} \cdots, \] 
that is, a subword $x^{a}$ in $R(x,y)$ yields a subword $ R(x,y)C(x,y)^{av}R(x,y)^{-1}$ in $R_{2}$. In the subword $R(x,y)$ and $R(x,y)^{-1}$ the letter $x^{\pm 1}$ appears $m$ times respectively, so in the whole word $R_{2}$ the letter $x^{\pm 1}$ appears at least $2m$ times.

Next we treat the case $R_{2} = R(A(x,y)^{v}, z)R(x,y)$. 
Recall that this happens only if $C(x,y) \equiv x^{\pm 1} \cdots x^{\pm 1}$. In particular, $C(x,y)$ contains at least one $x^{\pm 1}$.
As we have seen in the previous case, a subword $x^{a}$ in $R(x,y)$ yields a subword $ R(x,y)C(x,y)^{av}R(x,y)^{-1}$ in $\red(R(A(x,y)^{v}, z))$. In the present case, 
\[ R_{2} \equiv \red (R(A(x,y)^{v}, z)R(x,y)) \equiv \red ( \red (R(A(x,y)^{v}, z) \cdot R(x,y))  \]
hence the last subword $R(x,y)^{-1}$ in $R(x,y)C(x,y)^{av}R(x,y)^{-1}$, which is also a subword of $\red ( R(A(x,y)^{v}, z) )$, might be canceled in the word $R_{2}$. This shows that $R_{2}$ contains at least one subword of the form $R(x,y)C(x,y)^{av}$. 
 Since in this case $C(x,y)$ contains at least one $x^{\pm 1}$, the subword $R(x,y)C(x,y)^{av}$ contains at least $(m+1)$ $x^{\pm 1}$. In particular, $R_{2}$ contains at least $(m+1)$ $x^{\pm 1}$.
 
On the other hand, we have observed that $R_{1}$ contains exactly $m$ $x^{\pm 1}$. This contradicts $R_{1} \equiv R_{2}$. Hence $m=0$, and we conclude that $R(x,y)$ contains no $x^{\pm 1}$.

A similar argument shows that $R(x,y)$ contains no $y^{\pm 1}$ as well, so $R(x,y)$ must be the empty word. Thus $A(x,y) \equiv C(x,y)$ so $A(x,y)$ is cyclically reduced.
\end{proof}

By using this observation we complete the proof of lemma.
Assume that the number of $x^{\pm 1}$ and $y^{\pm 1}$ in $A(x,y)$ are $a$ and $b$, respectively.
Since we have observed that $A(x,y)$ is cyclically reduced, 
\[ \red( A(x, A(y,z)^{w})^{v} ) \equiv A(x, A(y,z)^{w})^{v}. \]
Thus, the number of $x^{\pm 1}$ in $\red(A(x, A(y,z)^{w})^{v})$ is $a|v|$. Similarly, 
\[ \red( A(A(x,y)^{v},z)^{w}) \equiv A(A(x,y)^{v},z)^{w} \]
so it contains $a^{2}|v| |w|$ $x^{\pm 1}$. By equation (\ref{eqn:cyc}), $a|v| = a^{2}|v||w|$. By considering the letter $z^{\pm 1}$, we get $b|w| = b^{2}|v||w|$. By hypothesis, $a, b, v,w \neq 0$ so we get $a=b=1$. 

However, the words $A(x,y) = x^{\pm 1} y^{\pm 1}, y^{\pm 1}x^{\pm 1}$ never satisfy the three equations [T],[M] and [B]. This shows that $W$ and $V$ generates a free group of rank two.
\end{proof} 

By using Lemma \ref{lem:free} we show that a solution of FGYBE must satisfy other equalities which are simpler than [T],[M] and [B]. First we consider the case $W \equiv x^{p}y^{s}\cdots $ $(s>0)$.

\begin{lem}
\label{lem:1plus}
Let $(W,V)$ be a solution of FGYBE. Assume that $W\equiv x^{p}y^{s}\cdots $ $(s>0)$, and that $V(x,y)$ contains at least one $x^{\pm 1}$. Then,
\begin{enumerate}
\item $W(x,y)^{p}V(x,y)^{p}=x^{p}y^{p}$. 
\item If $s \neq 1$, $(W,V) \equiv (y^{s},x^{m})$.
\end{enumerate}
\end{lem}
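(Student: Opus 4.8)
The plan is to turn each of the three identities [T], [M], [B] --- which are equalities of words in $F_{3}=\langle x,y,z\rangle$ --- into an equality of short initial segments, by comparing both sides up to the first occurrence of a chosen generator and invoking Lemma \ref{lem:fund}. The role of Lemma \ref{lem:free} is precisely to license this: since $W$ and $V$ freely generate a rank-two free group, substituting freely independent words for $x$ and $y$ produces no unexpected collapse, so the letter we track sits inside a genuine maximal power-block and is never canceled, which is exactly the hypothesis that Lemma \ref{lem:fund} requires.

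For (1) I would read off [T], written as $W(W,W(V,z))=W(x,W(y,z))$, at the first occurrence of $z$. On the right, $z$ first appears at the head of the $z^{s}$-block of the first copy of $W(y,z)\equiv y^{p}z^{s}\cdots$, so the prefix preceding it is $x^{p}y^{p}$. On the left, $z$ first appears at the head of the $z^{s}$-block of $W(V,z)\equiv V^{p}z^{s}\cdots$, so the prefix preceding it is $W^{p}V^{p}$. Because $s>0$ this first $z$ lies inside a real $z^{s}$-block and, after reducing the prefix, its neighbours are never $z^{-1}$; hence it is not canceled, and Lemma \ref{lem:fund} with distinguished generator $z$ yields $W^{p}V^{p}=x^{p}y^{p}$.

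For (2) the heart of the matter is to show that $s\geq 2$ forces $W\equiv y^{s}$, i.e.\ that $p=0$ and that $W$ carries no letters after the block $y^{s}$. I would again use [T], but now push past the first $z$ to the successive $z^{s}$-blocks: since the outer $W$ begins with $x^{p}y^{s}$ with $s\geq 2$, the inner words $W(V,z)$ and $W(y,z)$ each occur at least twice, and matching the material separating their leading $z^{s}$-blocks (tracked through the tail of $W$ by Lemma \ref{lem:free}) forces a relation of the form $V^{p}=y^{p}$ together with the vanishing of the tail of $W$. Since $V$ contains an $x$, the equality $V^{p}=y^{p}$ is impossible unless $p=0$, and the same block comparison excludes any further letters; hence $W\equiv y^{s}$.

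Once $W\equiv y^{s}$, condition [M] collapses to $V(y^{s},z^{s})=V(y,z)^{s}$. Writing $P:=V(y,z)$ and letting $\phi$ denote the endomorphism $y\mapsto y^{s}$, $z\mapsto z^{s}$ of $\langle y,z\rangle$, this reads $\phi(P)=P^{s}$. A block count finishes the argument: $\phi(P)$ has the same number of maximal power-blocks as $P$, whereas for $s\geq 2$ the power $P^{s}$ has strictly more blocks unless $P$ consists of a single block; as $V$ contains $x$, the word $P$ contains $y$, so $P\equiv y^{m}$ and therefore $V\equiv x^{m}$. The main obstacle is the middle step: controlling the tail of $W$ and the intermediate $z$-blocks it generates, which is exactly where the free-generation input of Lemma \ref{lem:free} is indispensable and where the bulk of the cancellation bookkeeping lives.
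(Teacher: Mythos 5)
Your treatment of part (1) is essentially the paper's argument and is fine: expand [T] using $W\equiv x^{p}y^{s}\cdots$, note that the first $z^{s}$-block survives reduction because the material between $z$-blocks is a word in $W,V$ which cannot collapse by Lemma \ref{lem:free}, and apply Lemma \ref{lem:fund}.

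Part (2), however, has a genuine gap: condition [T] alone cannot force $W\equiv y^{s}$, and your block-matching argument breaks down precisely in the case it would need to exclude, namely $W\equiv x^{p}y^{s}x^{-p}$. Write $W\equiv x^{p}y^{s}T(x,y)$; inside $\bigl(W(V,z)\bigr)^{s}$ the material separating consecutive leading $z^{s}$-blocks is $T(V,z)V^{p}$, and when $T(x,y)\equiv x^{-p}$ this word is trivial in $F_{2}$. That collapse is not ``unexpected'' and Lemma \ref{lem:free} does not forbid it: freeness of $\langle W,V\rangle$ only prevents the vanishing of words that are nontrivial as reduced words in the symbols $W^{\pm1},V^{\pm1}$, and $V^{-p}V^{p}$ is not such a word. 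Concretely, for every $s\geq 2$ the pair $W=xy^{s}x^{-1}$, $V=xy^{1-s}$ satisfies [T] identically (both sides reduce to $xyz^{s^{2}}y^{-1}x^{-1}$) and also satisfies your conclusion (1) (here $WV=xy$), so no relation $V^{p}=y^{p}$ and no vanishing of the tail of $W$ can be extracted from [T]. This is exactly why the paper splits (2) into the cases $W\equiv x^{p}y^{s}x^{-p}$ and $W\not\equiv x^{p}y^{s}x^{-p}$: in the first case it uses (1) to get $V^{p}=x^{p}y^{p(1-s)}$, a primitivity argument to force $p\in\{0,\pm1\}$, and then condition [B] --- not [T] --- to kill $p=\pm1$; only in the second case does the [T]-comparison apply, and there the extracted relation is $V^{q}=y^{q}$ with $q$ the second $x$-exponent of $\red(W^{s})$ (not $p$), giving $V=y$ and a contradiction with $V$ containing $x^{\pm1}$.

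A secondary issue: your closing block count for [M] is also false as stated. For $P=yzy^{-1}$ one has $P^{s}=yz^{s}y^{-1}$, which has the same number of maximal blocks as $P$, so ``$P^{s}$ has strictly more blocks unless $P$ is a single block'' fails. This step is repairable along the paper's lines (the leading block of $\red(V(y^{s},z^{s}))$ is $y^{as}$ while that of $\red(V(y,z)^{s})$ is $y^{a}$, which is incompatible for $a\neq 0$, $s\geq 2$), but the missing conjugate case above is not a matter of repair --- it requires invoking [B] (or [M]) before you may conclude $W\equiv y^{s}$.
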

\begin{proof}
By hypothesis, we have 
\begin{eqnarray*}
 W( W(x,y),W(V(x,y),z) )  &= &W( W(x,y), V(x,y)^{p}z^{s} \cdots) \\
 & = & W(x,y)^{p}(V(x,y)^{p} z^{s} \cdots)^{s} \cdots \\
 & = & W(x,y)^{p} V(x,y)^{p} z^{s} \cdots 
 \end{eqnarray*}
 and
 \[ W(x, W(y,z)) = W(x, y^{p}z^{s} \cdots ) = x^{p}(y^{p}z^{s}\cdots)^{s} \cdots = x^{p}y^{p}z^{s} \cdots. \]
 Hence by [T], we get an equation
 \[ W(x,y)^{p} V(x,y)^{p} z^{s} \cdots =  x^{p}y^{p}z^{s} \cdots. \]
The left side of the above equation is written as $W(x,y)^{p} V(x,y)^{p} z^{s} R(W,V) z^{s'}\cdots $, where $R(W,V)$ is a reduced, non-empty word on $W=W(x,y)$ and $V=V(x,y)$.
By Lemma \ref{lem:free} we have observed that $W$ and $V$ generates the free group of rank two, hence $R(W,V)$ is a non-trivial element in $F_{2}$. In particular, as a word on $\{x^{\pm 1},y^{\pm 1}\}$, the word $R(W,V)$ cannot be reduced to the empty word. Therefore the subword $z^{s}$ is not canceled in the reducing procedure. Similarly, in the right side the subword $z^{s}$ is not canceled. 
So by Lemma \ref{lem:fund}, $W(x,y)^{p}V(x,y)^{p}=x^{p}y^{p}$.

To show (2), first we consider the case $W(x,y) \equiv x^{p}y^{s}x^{-p}$. Then by (1),
$x^{p}y^{ps}x^{-p}V(x,y)^{p} = x^{p}y^{p} $ so $V(x,y)^{p} = x^{p}y^{p(1-s)}$. If $|p|>1$ then $x^{p}y^{p(1-s)}$ is primitive, which contradicts $V(x,y)^{p} = x^{p}y^{p(1-s)}$. Thus $p= \pm 1, 0$.

If $p=0$, then $W(x,y) \equiv y^{s}$. By [M], $V(y^{s},z^{s}) = V(y,z)^{s}$. This equation is clearly satisfied for $s=1$, so assume that $s>1$.
If $V(y,z) \equiv y^{a}z^{b} \cdots$ $(a,b \neq 0)$, then $\red(V(y^{s},z^{s})) \equiv y^{as}z^{bs}\cdots$ but $\red(V(y,z)^{s}) \equiv y^{a}\cdots$, so this cannot happen. Similarly, $V(y,z) \equiv z^{a}y^{b}\cdots$ is also impossible, hence $V(y,z)\equiv y^{m}, z^{m}$ $(m \in \Z)$. Since we have assumed that $V(x,y)$ contains at least one $x^{\pm 1}$, we conclude $(W,V) \equiv (y^{s},x^{m})$ if $s \neq 1$.

If $p=1$, then $W(x,y) \equiv xy^{s}x^{-1}$ and $V(x,y) \equiv xy^{1-s}$. By [B],
\[ xy^{1-s}z^{1-s} = xyz^{s(1-s)}y^{-1}(yz^{1-s})^{1-s}. \]
This equality is satisfied only if $s=1$.
Similarly, if $p=-1$ then by [B] 
\[ z^{1-s}y^{1-s}x = (z^{1-s}y)^{s}y^{-1}z^{s(1-s)}y x. \]
 This equality cannot be satisfied for any $s>0$.
This completes the proof of (2) for the case $W(x,y) \equiv x^{p}y^{s}x^{-p}$.

Next we assume that $W(x,y) \not \equiv x^{p}y^{s}x^{-p}$.
Then for $s>1$, $\red(W(x,y)^{s}) \not \equiv x^{a}y^{b}, x^{a}y^{b}x^{c}$ for any $a,b,c \in \Z -\{0\}$, so 
\[ \red( W(x,y)^{s}) \equiv x^{p}y^{s}x^{q}y^{r} \cdots\]
 for some $q,r \neq 0$.
Therefore by [T], we have
\[ W(x,y)^{p} V(x,y)^{p} z^{s} V(x,y)^{q} z^{r} \cdots = x^{p}y^{p}z^{s}y^{q}z^{r} \cdots \] 
The subwords $z^{s}$ and $z^{r}$ are not canceled in both sides, so by Lemma \ref{lem:fund}, $V(x,y)^{q} = y^{q}$. Since $q \neq 0$, we get $V(x,y)=y$, which contradicts the hypothesis. 
\end{proof}

\begin{prop}
\label{prop:plus}
Let $(W,V)$ be a solution of FGYBE. Assume that $W\equiv x^{p}y^{s}\cdots $ $(s>0)$ and that $V(x,y)$ contains at least one $x^{\pm 1}$. Then $W(x,y) \equiv x^{p}yx^{q}$ or $W(x,y)=y^{s}$.
\end{prop}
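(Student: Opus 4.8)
The plan is to reduce immediately to the case $s=1$ and then pin down the shape of $W$ by tracking the positions of the letter $z$ in condition [T]. If $s\neq 1$, then Lemma \ref{lem:1plus}(2) already gives $(W,V)\equiv(y^{s},x^{m})$, so $W=y^{s}$ and we are in the second alternative of the conclusion. Hence I would assume $s=1$ from now on, so that $W\equiv x^{p}yU$ where $U$ is empty or begins with $x^{\pm1}$, and I must show $U\equiv x^{q}$, i.e. that $W$ contains exactly one occurrence of $y^{\pm1}$ (the cases $U\equiv x^{q}$ and $U$ empty are both instances of the target form $x^{p}yx^{q}$).

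Suppose for contradiction that $W$ contains a second $y^{\pm1}$. Since $s=1$ forces the block following the first $y$ to be an $x$-block, $W$ then has the form $W\equiv x^{p}yx^{q}y^{t}\cdots$ with $q,t\neq0$. Writing $W(a,b)\equiv a^{p}\,b\,a^{q}\,b^{t}\cdots$, I would substitute into [T]. Using $W(V,z)\equiv V^{p}zV^{q}z^{t}\cdots$ and $W(y,z)\equiv y^{p}zy^{q}z^{t}\cdots$, the two sides of $W(W,W(V,z))=W(x,W(y,z))$ begin, respectively, with $W^{p}V^{p}\,z\,V^{q}z^{t}\cdots$ and $x^{p}y^{p}\,z\,y^{q}z^{t}\cdots$. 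By Lemma \ref{lem:1plus}(1) the leading parts agree, $W^{p}V^{p}=x^{p}y^{p}$, so the only remaining information comes from what happens after the first $z$.

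The key point is that the first $z$ is not canceled on either side: on the left it is flanked by the powers $V^{p}$ and $V^{q}$, which are reduced words in $x,y$ containing no $z$ and with $V^{q}$ non-trivial by Lemma \ref{lem:free} (as $q\neq0$), and similarly on the right. Consequently both reduced words begin with $x^{p}y^{p}z$ and, being reduced forms of the same element, they are identical word-for-word after this common prefix. Reading off the segment lying between the first $z$ and the next $z$-block $z^{t}$ then gives $V^{q}\equiv y^{q}$ (this is exactly the situation governed by Lemma \ref{lem:fund}, with $z$ playing the role of the distinguished letter), whence $V\equiv y$. This contradicts the standing hypothesis that $V$ contains at least one $x^{\pm1}$. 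Therefore $U\equiv x^{q}$ and $W\equiv x^{p}yx^{q}$, as claimed.

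The main obstacle is the verification that the two relevant $z$-blocks are never canceled, so that the letter-by-letter comparison (equivalently, Lemma \ref{lem:fund}) is legitimate; this is where Lemma \ref{lem:free} is essential, since it guarantees that the separating $V$-powers are non-trivial and hence can neither disappear nor merge adjacent occurrences of $z$. Everything else is a routine expansion of [T].
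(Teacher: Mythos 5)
Your proposal is correct and follows essentially the same route as the paper: reduce to $s=1$ via Lemma \ref{lem:1plus}(2), assume a second $y^{\pm1}$ so that $W\equiv x^{p}yx^{q}y^{t}\cdots$, expand [T], use Lemma \ref{lem:free} to see the separating powers of $V$ are nontrivial so the $z$-blocks survive, and conclude $V^{q}=y^{q}$ by Lemma \ref{lem:fund}, contradicting that $V$ contains an $x^{\pm1}$. Your explicit invocation of Lemma \ref{lem:1plus}(1) to align the prefixes is only a presentational refinement of the paper's terser appeal to Lemma \ref{lem:fund}.
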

\begin{proof}
We have already seen that if $s\neq 1$, then $(W,V)=(y^{s},x^{m})$, so we assume that $s=1$.
Assume that $W$ is written as $W(x,y) \equiv x^{p}yx^{q}y^{r}\cdots$ for $q,r \neq 0$.
By [T], we have
\[ W(x,y)^{p} (V(x,y)^{p} z V(x,y)^{q} z^{r} \cdots) \cdots  = x^{p}(y^{p}zy^{q}z^{r} \cdots) \cdots \]
Since $s=1$, the subwords $z$ and $z^{r}$ are not canceled so $V(x,y)^{q} = y^{q}$. Then $q\neq 0$ implies $V(x,y)=y$, which contradicts the hypothesis.
\end{proof}

Next we study the case $W \equiv x^{p}y^{-s}\cdots $ $(s>0)$. The argument is similar, but we need to consider the initial part of the word $W^{-1}$ so we must take care of the suffix of $W$ as well.

\begin{lem}
\label{lem:1minus}
Let $(W,V)$ be a solution of FGYBE. Assume that $W\equiv x^{p}y^{-s}\cdots  \equiv \cdots y^{-r}x^{-q}$ $(s>0, r\neq 0)$, and that $V(x,y)$ contains at least one $x^{\pm 1}$. Then,
\begin{enumerate}
\item $W(x,y)^{p}V(x,y)^{q}=x^{p}y^{q}$.
\item if $s \neq 1$, $(W,V) \equiv (y^{-s},x^{m})$.
\end{enumerate}
\end{lem}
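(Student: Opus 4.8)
The plan is to rerun the proof of Lemma~\ref{lem:1plus} almost verbatim, with the single structural change forced by the sign of the second syllable of $W$. Because $W \equiv x^{p}y^{-s}\cdots$ has a \emph{negative} $y$-exponent, every substitution $W(a,b)$ produces the factor $b^{-s}$, i.e.\ an \emph{inverse} power; consequently the leading behaviour of the compositions occurring in [T] is governed by $W^{-1}$ rather than $W$, hence by the suffix $\cdots y^{-r}x^{-q}$ of $W$. This is precisely why the statement records both ends of $W$, and why the conclusion of part~(1) carries $V^{q}$ (read off from the trailing $x^{-q}$) in place of the $V^{p}$ of Lemma~\ref{lem:1plus}.

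For part~(1) I would expand both sides of [T]. Writing $W=W(x,y)$ and $V=V(x,y)$, the right-hand side is $W(x,W(y,z)) = x^{p}(W(y,z))^{-s}\cdots$; since $W(y,z)\equiv y^{p}z^{-s}\cdots z^{-r}y^{-q}$ one has $W(y,z)^{-1}\equiv y^{q}z^{r}\cdots z^{s}y^{-p}$, so the right-hand side begins $x^{p}y^{q}z^{r}\cdots$. Likewise $W(V,z)\equiv V^{p}z^{-s}\cdots z^{-r}V^{-q}$, whence $W(V,z)^{-1}$ begins $V^{q}z^{r}\cdots$ and the left-hand side $W(W,W(V,z)) = W^{p}(W(V,z))^{-s}\cdots$ begins $W^{p}V^{q}z^{r}\cdots$. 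As $r\neq 0$, the first $z$-syllable on each side is $z^{r}$, and everything preceding it, namely $x^{p}y^{q}$ on the right and $W^{p}V^{q}$ on the left, is a $z$-free word on $\{x^{\pm 1},y^{\pm 1}\}$. By Lemma~\ref{lem:free} the subgroup $\langle W,V\rangle$ is free of rank two, so the $W,V$-syllables flanking $z^{r}$ represent non-trivial elements and cannot collapse; hence $z^{r}$ is not canceled on either side, and Lemma~\ref{lem:fund}, applied with $z$ as the distinguished letter and $z^{r}$ as the common non-canceled subword, yields $W^{p}V^{q}=x^{p}y^{q}$.

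For part~(2), in which $s\neq 1$ and hence $s\geq 2$, I would follow the case split of Lemma~\ref{lem:1plus}. When $W\equiv x^{p}y^{-s}x^{-p}$ one has $q=p$ and $r=s$, so part~(1) reads $W^{p}V^{p}=x^{p}y^{p}$ and gives $V^{p}=x^{p}y^{p(s+1)}$; since $x^{p}y^{p(s+1)}$ is primitive this forces $p\in\{0,\pm 1\}$. For $p=0$ we have $W\equiv y^{-s}$, and [M] collapses to $V(y^{-s},z^{-s})=V(y,z)^{-s}$; for $s\geq 2$ a comparison of leading syllables shows $V(y,z)$ must be a single power of $y$ or of $z$, and because $V$ contains $x^{\pm 1}$ we are forced to $(W,V)\equiv(y^{-s},x^{m})$. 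For $p=\pm 1$ part~(1) pins down $V$ explicitly (for instance $V\equiv xy^{s+1}$ when $p=1$), and substituting into [B] produces an immediate contradiction, the two sides already disagreeing in the syllable following the initial $x^{\pm 1}$. When instead $W\not\equiv x^{p}y^{-s}x^{-p}$, the reduced word $W^{-1}$ carries extra syllables, so the inverse powers occurring in [T] exhibit a second $z$-marker; Lemma~\ref{lem:fund} then forces a relation $V^{c}=y^{c}$ with $c\neq 0$, whence $V=y$, contradicting the hypothesis that $V$ contains $x^{\pm 1}$.

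The genuine difficulty is the bookkeeping of the inversions. One must check that $(W(V,z))^{-s}$ and $(W(y,z))^{-s}$ really do begin with $V^{q}z^{r}$ and $y^{q}z^{r}$ respectively, and in particular that the displayed $z^{r}$ is the first $z$-syllable and survives reduction. The delicate configuration is a possible cancellation between the trailing $V^{-p}$ of one copy and the leading $V^{q}$ of the next inside the power $(W(V,z)^{-1})^{s}$, which happens exactly when $q=p$; this is also the reason the conjugate case $W\equiv x^{p}y^{-s}x^{-p}$ (where $q=p$, $r=s$) must be peeled off and handled separately. Everywhere else the required non-cancellation is supplied, just as in the generic analysis preceding this lemma, by the freeness of $\langle W,V\rangle$ from Lemma~\ref{lem:free}.
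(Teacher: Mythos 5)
Your proposal is correct and follows essentially the same route as the paper, which itself proves this lemma by rerunning the argument of Lemma~\ref{lem:1plus} with $W^{-1}\equiv x^{q}y^{r}\cdots$ supplying the prefix data: part~(1) via [T], Lemma~\ref{lem:free} and Lemma~\ref{lem:fund} at the first $z$-syllable, and part~(2) via the same case split (the conjugate case $W\equiv x^{p}y^{-s}x^{-p}$ handled through primitivity, [M] for $p=0$ and [B] for $p=\pm1$; the non-conjugate case through a second $z$-marker in $\red(W^{-s})$ forcing $V=y$). Your write-up in fact supplies details the paper leaves implicit by its reference to Lemma~\ref{lem:1plus}.
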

\begin{proof}
By hypothesis $W(x,y)^{-1} \equiv x^{q}y^{r}\cdots$, so by the same argument as in Lemma \ref{lem:1plus}, we get an equation
\[ W(x,y)^{p}V(x,y)^{q}z^{r} \cdots = x^{p}y^{q}z^{r}\cdots \]
from [T].
By a similar argument as in Lemma \ref{lem:1plus}, in both sides the subwords $z^{r}$ are not canceled, so we conclude $W(x,y)^{p}V(x,y)^{q}=x^{p}y^{q}$.

To show (2), first we consider the case $W(x,y) \equiv x^{p}y^{-s}x^{-p}$. Then by a similar argument of the proof of Lemma \ref{lem:1plus}, we conclude $s=1$ unless $(W,V)=(y^{-s},x^{m})$.
Next we assume that $W(x,y) \not \equiv x^{p}y^{-s}x^{-p}$. If $s>1$, then 
\[ \red(W^{-s}) \equiv x^{q}y^{r}x^{q'}y^{r'} \cdots \]
 for $q',r' \neq 0$. By [T], 
\[ W(x,y)^{p} V(x,y)^{q} z^{r} V(x,y)^{q'} z^{r'}  \cdots  = x^{p}y^{q}z^{r}y^{q'}z^{r'} \cdots. \]
In both sides the subwords $z^{r'}$ and $z^{r}$ are not canceled, so $V(x,y)^{q'} = y^{q'}$. $q' \neq 0$ implies $V(x,y)=y$, which contradicts the hypothesis.
\end{proof}

The following Proposition is proved in the same way as Proposition \ref{prop:plus}. 

\begin{prop}
\label{prop:minus}
Let $(W,V)$ be a solution of FGYBE. Assume that $W\equiv x^{p}y^{-s}\cdots \equiv \cdots y^{-r}x^{-q}$ $(s>0, r\neq 0)$, and that $V(x,y)$ contains at least one $x^{\pm 1}$. Then $W(x,y) \equiv x^{p}y^{-1}x^{q}$ or $W(x,y)=y^{-s}$.
\end{prop}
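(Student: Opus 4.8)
The plan is to run the proof of Proposition~\ref{prop:plus} essentially verbatim, with $W^{-1}$ taking over the role played there by $W$; this is exactly the passage from $W$ to $W^{-1}$ by which Lemma~\ref{lem:1minus} was obtained from Lemma~\ref{lem:1plus}. First, Lemma~\ref{lem:1minus}(2) settles the case $s \neq 1$, where $(W,V) \equiv (y^{-s}, x^{m})$ and hence $W(x,y) = y^{-s}$, the second alternative. So I may assume $s = 1$, i.e.\ $W \equiv x^{p} y^{-1} \cdots$. Since $W$ is reduced and its first $y$-block is $y^{-1}$, the letter after $y^{-1}$ is a power of $x$; thus either $W \equiv x^{p} y^{-1} x^{q}$, which is the desired form, or $W$ continues beyond this initial segment.

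Assume then $W \not\equiv x^{p} y^{-1} x^{q}$. The suffix hypothesis $W \equiv \cdots y^{-r} x^{-q}$ gives $W^{-1} \equiv x^{q} y^{r} \cdots$, and since $W$ runs past $x^{p} y^{-1} x^{q}$ its inverse runs past $x^{q} y^{r} x^{q'}$, so I may write $W^{-1} \equiv x^{q} y^{r} x^{q'} y^{r'} \cdots$ with $q', r' \neq 0$. The computation of [T] now rests on the identities $W(V(x,y),z)^{-1} = W^{-1}(V(x,y),z)$ and $W(y,z)^{-1} = W^{-1}(y,z)$, valid because substitution is a homomorphism. Feeding these into [T] while reading off the initial part $x^{p} y^{-1}$ of $W$ yields
\[ W(x,y)^{p} V(x,y)^{q} z^{r} V(x,y)^{q'} z^{r'} \cdots = x^{p} y^{q} z^{r} y^{q'} z^{r'} \cdots. \]

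From here I argue exactly as in Proposition~\ref{prop:plus}. By Lemma~\ref{lem:1minus}(1) we have $W(x,y)^{p} V(x,y)^{q} = x^{p} y^{q}$, so both sides begin with the common block $x^{p} y^{q} z^{r}$; Lemma~\ref{lem:free} ensures the interposed power $V(x,y)^{q'}$ is a nontrivial word that cannot collapse, so neither $z^{r}$ nor $z^{r'}$ is canceled on either side. Peeling off $x^{p} y^{q} z^{r}$ and applying Lemma~\ref{lem:fund} with $z$ as the distinguished generator to the remaining equal tails $V(x,y)^{q'} z^{r'} \cdots = y^{q'} z^{r'} \cdots$ forces $V(x,y)^{q'} = y^{q'}$, and since $q' \neq 0$ this gives $V(x,y) = y$, contradicting that $V$ contains an $x^{\pm 1}$. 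The one step requiring genuine care is the middle one: the inversion bookkeeping must be arranged so that the powers of $z$ fall into the asserted positions and in the asserted order; once the displayed equation is in hand, the non-cancellation argument and the appeal to Lemma~\ref{lem:fund} are identical to the plus case.
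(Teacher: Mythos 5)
Your proof is correct and takes essentially the same route as the paper: the paper's entire proof of Proposition~\ref{prop:minus} is the single remark that it ``is proved in the same way as Proposition~\ref{prop:plus}'', and your argument is precisely that adaptation, using Lemma~\ref{lem:1minus}(2) for $s\neq 1$ and, for $s=1$, reading the $[\mathrm{T}]$-equation through $W^{-1}\equiv x^{q}y^{r}x^{q'}y^{r'}\cdots$ exactly as in the paper's proof of Lemma~\ref{lem:1minus}(2). The resulting displayed equation, the non-cancellation of the $z$-blocks via Lemma~\ref{lem:free}, and the appeal to Lemma~\ref{lem:fund} to force $V(x,y)^{q'}=y^{q'}$ all match the paper's intended argument.
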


Summarizing, we obtain candidates of general solutions of FGYBE.

\begin{prop}
\label{prop:generic}
Let $(W,V)$ be a solution of FGYBE. Assume that $W$ contains at least one $y^{\pm 1}$ and $V$ contains at least one $x^{\pm 1}$. Moreover, assume that $(W,V) \neq (y^{s},x^{m})$. Then,

\begin{enumerate}
\item $W(x,y) \equiv x^{p}y^{\varepsilon}x^{q}$, $V(x,y) \equiv y^{r}x^{\delta}y^{s}$, where $p,q,r,s \in \Z$ and $\varepsilon,\delta \in \{\pm 1\}$.
\item $pqrs=0$. 
\end{enumerate}

\end{prop}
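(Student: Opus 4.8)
The plan is to read off (1) directly from the structural Propositions \ref{prop:plus} and \ref{prop:minus}, and then to settle (2) by a short first-letter argument built on the auxiliary identities in Lemmas \ref{lem:1plus} and \ref{lem:1minus}. First I would pin down the shape of $W$: since $W$ contains a $y^{\pm1}$, its leading $y$-syllable is $y^{\varepsilon s}$ for some $s>0$ and $\varepsilon\in\{\pm1\}$. If $\varepsilon=1$, Proposition \ref{prop:plus} gives $W\equiv x^p y x^q$ or $W=y^s$; if $\varepsilon=-1$, Proposition \ref{prop:minus} gives $W\equiv x^p y^{-1} x^q$ or $W=y^{-s}$. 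In the degenerate case $W=y^{\pm s}$ with $s\geq 2$, part (2) of Lemma \ref{lem:1plus} (resp.\ Lemma \ref{lem:1minus}) forces $(W,V)=(y^{\pm s},x^m)$, which is excluded by hypothesis, while $s=1$ already has the desired form with $p=q=0$. Hence $W\equiv x^p y^\varepsilon x^q$. To obtain the shape of $V$ I would invoke the \emph{dual symmetry}: the dual $(\barW,\barV)$ with $\barW(x,y)=V(y,x)$ and $\barV(x,y)=W(y,x)$ is again a solution, it again satisfies the generic hypotheses (as $\barW$ contains $y^{\pm1}$ because $V$ contains $x^{\pm1}$, and $\barV$ contains $x^{\pm1}$ because $W$ contains $y^{\pm1}$), and the relation $(\barW,\barV)=(y^{s'},x^{m'})$ would force $(W,V)=(y^{m'},x^{s'})$, again excluded. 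Applying the statement just proved to the dual yields $V(y,x)=\barW(x,y)\equiv x^{p'}y^{\varepsilon'}x^{q'}$, so that $V(x,y)\equiv y^{p'}x^{\varepsilon'}y^{q'}$, which is precisely the asserted form $V\equiv y^r x^\delta y^s$. This establishes (1).

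For (2) I would argue by contradiction, assuming $p,q,r,s$ are all nonzero. The identities (1) of Lemmas \ref{lem:1plus} and \ref{lem:1minus} give, according to $\varepsilon=1$ or $\varepsilon=-1$, either $W^pV^p=x^py^p$ or $W^pV^{-q}=x^py^{-q}$; in either case a nonzero power of $V$ is expressed as $W^{-p}x^p y^{t}$ for a suitable exponent $t$. The point is that the two sides begin with different letters. On the left, since $r,s\neq0$ the word $V\equiv y^r x^\delta y^s$ begins and ends with a power of $y$, so every nonzero power of $V$ begins with the letter $y$. On the right, since $p$ and $q$ are nonzero the reduced form of $W^{-p}$ begins with a nonzero power of $x$; multiplying on the right by $x^p y^t$ interacts only with the terminal $x$-syllable of $W^{-p}$ and never reaches its front, so $W^{-p}x^p y^t$ still begins with the letter $x$. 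This letter-clash is the desired contradiction, and it forces at least one of $p,q,r,s$ to vanish, i.e.\ $pqrs=0$.

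The routine parts are the case distinctions on the sign of the leading $y$-syllable and the bookkeeping of the identities from Lemmas \ref{lem:1plus} and \ref{lem:1minus}. The one step that requires genuine care — and which I regard as the crux — is the first-letter analysis of $W^{-p}x^p y^t$: one must verify that the cancellation triggered by the factor $x^p$ is confined to the right-hand end of $W^{-p}$ and does not propagate to its initial $x$-syllable. This is elementary but needs the separate treatment of $p>0$ and $p<0$ (and the borderline $p+q=0$, where $W^{-p}$ is a conjugate of the form $x^p y^{\mp p} x^{-p}$); in every subcase the initial power of $x$ survives, which is exactly what makes the clash with the left-hand side decisive.
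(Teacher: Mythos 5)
Your proposal is correct. Part (1) follows the paper's own route: the shape of $W$ is read off from Propositions \ref{prop:plus} and \ref{prop:minus}, the degenerate outcomes $W=y^{\pm s}$ are eliminated via part (2) of Lemmas \ref{lem:1plus} and \ref{lem:1minus} together with the hypothesis $(W,V)\neq(y^{s},x^{m})$ (a step the paper leaves implicit), and the shape of $V$ comes from the dual symmetry, exactly as in the paper. Your part (2), however, is a genuinely different argument. The paper derives $pqrs=0$ from condition [M]: it substitutes the forms from (1) into [M], argues that the letters $x^{\delta}$ and $z^{\varepsilon}$ are never canceled, and extracts a relation $W^{N}=y^{N}$ or $V^{N}=y^{N}$ for some $N\in\{p,q,r,s\}$, contradicting Lemma \ref{lem:free}; this needs a sign-by-sign case analysis of which the paper only illustrates the case $r>0$, $p>0$. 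You instead never touch [M]: you reuse the already-established [T]-consequences, namely Lemma \ref{lem:1plus}(1) for $\varepsilon=1$ and Lemma \ref{lem:1minus}(1) for $\varepsilon=-1$, to express a nonzero power of $V$ as $W^{-p}x^{p}y^{t}$, and obtain the contradiction from uniqueness of reduced words: with $r,s\neq0$ every nonzero power of $V\equiv y^{r}x^{\delta}y^{s}$ begins with $y^{\pm1}$ (this holds even in the conjugate case $r+s=0$, where $V^{n}=y^{r}x^{n\delta}y^{s}$ --- worth stating explicitly, since "begins and ends with $y$" alone does not rule out internal collapse), while with $p,q\neq0$ the word $W^{-p}x^{p}y^{t}$ begins with $x^{\pm1}$, because the cancellation triggered by $x^{p}$ consumes at most the terminal $x$-syllable of $W^{-p}$, including in the borderline case $q=-p$. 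Your route buys a cleaner logical structure --- it shows that [T] alone (through Lemmas \ref{lem:1plus} and \ref{lem:1minus}) already forces $pqrs=0$, so [M] is not needed for this proposition --- and it replaces the paper's repeated non-cancellation analysis by a single first-letter clash; the cost is the elementary but necessary verification, which you correctly single out as the crux, that the $x^{p}$-cancellation does not propagate to the front of $W^{-p}$.
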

\begin{proof}
(1) for $W(x,y)$ follows from Proposition \ref{prop:plus} and Proposition \ref{prop:minus}. To prove (1) for $V(x,y)$ we just consider the dual solution.

To show (2), we observe that by [M]
\[ [(y^{r}\underline {x^{\delta}}\, y^{s})^{p} \underline{z^{\varepsilon}}\,(y^{r}x^{\delta}y^{s})^{q} ]^{r} \cdots = [ (y^{p}\underline{z^{\varepsilon}}\,y^{q})^{r}\underline{x^{\delta} }\,(y^{p}z^{\varepsilon}y^{q})^{s}]^{p}\cdots .\]
The desired equation $pqrs = 0$ is obtained as follows.
As in the argument in Lemma \ref{lem:1plus}, underlined subwords $\underline{x^{\delta}}$ and $\underline{z^{\varepsilon}}$ are not canceled during the reducing procedure. By looking at non-canceled subwords of the form $x^{\pm 1}$ or $z^{\pm 1}$ derived from the underlined subwords, we get the relation of the form $W(x,y)^{N} = y^{N}$ or $V(y,z)^{N} = y^{N}$ for some $N \in \{p,q,r,s\}$. If $pqrs \neq 0$, then $N\neq 0$, so this contradicts the hypothesis.

To illustrate a precise argument, let us consider the case $r>0$ and $p>0$, for example.
In this case, we get
\[ y^{r}x^{\delta} \cdots = (y^{p} z^{\varepsilon}y^{q})^{r} x^{\delta}\cdots. \]
As in the argument in Lemma \ref{lem:1plus}, the subwords $x^{\delta}$ are not canceled in both sides, hence we get $y^{r}=(y^{p} z^{\varepsilon}y^{q})^{r} = W(y,z)^{r}$. Since $r \neq 0$ this implies $W(y,z)=y$, which is a contradiction.

Other cases are proved in a similar way.
\end{proof}

Now we are ready to give a classification of generic solutions of FGYBE.
\begin{prop}
Let $(W,V)$ be a solution of FGYBE. Assume that $W$ contains at least one $y^{\pm 1}$ and $V$ contains at least one $x^{\pm 1}$. Then $(W,V)$ or its dual is one of the solutions (6)--(13) listed in Theorem \ref{thm:main}.
\end{prop}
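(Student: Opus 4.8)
The plan is to feed the normal forms provided by Proposition \ref{prop:generic} into the three equations [T], [M], [B], using the dual symmetry to halve the work and Lemma \ref{lem:fund} to read off the exponents. First, if $(W,V)=(y^{s},x^{m})$ we are already at (13), so assume otherwise; then Proposition \ref{prop:generic} gives $W\equiv x^{p}y^{\varepsilon}x^{q}$ and $V\equiv y^{r}x^{\delta}y^{s}$ with $\varepsilon,\delta\in\{\pm1\}$ and $pqrs=0$. Now $pqrs=0$ is equivalent to ``$pq=0$ or $rs=0$'', and passing to the dual interchanges the triples $(p,\varepsilon,q)$ and $(r,\delta,s)$ (since $\barW(x,y)=V(y,x)=x^{r}y^{\delta}x^{s}$ and $\barV(x,y)=W(y,x)=y^{p}x^{\varepsilon}y^{q}$), hence interchanges the conditions $pq=0$ and $rs=0$. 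So I may replace $(W,V)$ by its dual if needed and assume $pq=0$; since the dual of the dual is the original, it then suffices to show that every such solution lies in the list (6)--(13) or has its dual there. Under $pq=0$, $W$ is one of $y^{\varepsilon}$, $x^{p}y^{\varepsilon}$ ($p\neq0$), or $y^{\varepsilon}x^{q}$ ($q\neq0$).

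In the case $W\equiv y^{\varepsilon}$ one has $W(a,b)=b^{\varepsilon}$, so [T] becomes a tautology, while [M] is automatic when $\varepsilon=1$ and forces $r=s$ when $\varepsilon=-1$. The content is then carried by [B], namely $V(V(x,y),z)=V(V(x,W(y,z)),V(y,z))$, which for $\varepsilon=1$ simplifies (using $W(y,z)=z$) to the self-distributivity identity $V(V(x,y),z)=V(V(x,z),V(y,z))$. I would solve this by expanding both sides with $V\equiv y^{r}x^{\delta}y^{s}$, observing that each side contains exactly one letter $x^{\pm1}$, and applying Lemma \ref{lem:fund} to the non-canceling $y$- and $z$-blocks immediately flanking that $x$. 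This pins the exponents to $V\equiv x^{m}$, $V\equiv y^{-m}xy^{m}$, $V\equiv yx^{-1}y$ when $\varepsilon=1$ (giving (6)/(13), (7), (8)) and to $V\equiv x^{m}$, $V\equiv yxy$ when $\varepsilon=-1$ (giving (9)/(13), (10)).

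In the remaining case $W\equiv x^{p}y^{\varepsilon}$ with $p\neq0$, or $W\equiv y^{\varepsilon}x^{q}$ with $q\neq0$, the word $W$ genuinely involves an $x$-block, so all three of [T], [M], [B] now carry information. I would substitute the normal forms and extract exponent relations exactly as in the proofs of Lemmas \ref{lem:1plus} and \ref{lem:1minus}: locate the non-canceling occurrences of $x^{\pm1}$ and $z^{\pm1}$ coming from the substituted generators, then invoke Lemma \ref{lem:fund}. These relations should force $\varepsilon=1$ and the nonzero $x$-exponent of $W$ to equal $2$, and then determine the rest, leaving exactly $(x^{2}y,\,y^{-1}x^{-1}y)$ and $(yx^{2},\,yx^{-1}y^{-1})$. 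These are the duals of (12) and (11) respectively, so (11) and (12) lie in the list and the claim holds.

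The main obstacle will be the word-combinatorial bookkeeping in the last case: unlike $W\equiv y^{\varepsilon}$, the substituted words in [T], [M], [B] involve several interacting $x$- and $y$-blocks, and one must argue carefully that the relevant subwords never cancel before Lemma \ref{lem:fund} applies. Here Lemma \ref{lem:free}, guaranteeing that $\langle W,V\rangle$ is free of rank two, together with the non-cancellation arguments already developed for Lemmas \ref{lem:1plus} and \ref{lem:1minus}, should give exactly the control over cancellations needed to reduce each equation to an identity of exponents and to dispose of the finitely many subcases.
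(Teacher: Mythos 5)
Your overall architecture is valid and is essentially the mirror image of the paper's: the paper uses duality to normalize the $V$-side (so that $r=0$ or $s=0$), while you normalize the $W$-side (so that $pq=0$); both are legitimate, and your treatment of the case $W\equiv y^{\varepsilon}$ is correct. (Under duality it corresponds to the paper's Case 3, $V\equiv x^{\delta}$, which the paper settles with [M] plus part (1) of Lemmas \ref{lem:1plus} and \ref{lem:1minus}; your route via the self-distributivity form of [B], reading off the prefix and suffix of the unique non-cancelling letter $x^{\pm1}$ with Lemma \ref{lem:fund}, also works and recovers (6)--(10) and (13).)

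The genuine gap is the remaining case, $W\equiv x^{p}y^{\varepsilon}$ ($p\neq 0$) or $W\equiv y^{\varepsilon}x^{q}$ ($q\neq 0$), which is exactly where the duals of (11) and (12) live and which is the combinatorial core of the proposition: you state the expected answer (correctly) but give no derivation, and the method you point to would not carry you through. Specifically, for $W\equiv y\,x^{q}$ --- the subcase containing the dual of (11), namely $(yx^{2},\,yx^{-1}y^{-1})$ --- Lemma \ref{lem:1plus} applies only with $p=0$, so its conclusion $W^{p}V^{p}=x^{p}y^{p}$ is the tautology $1=1$, and Lemma \ref{lem:1minus} does not apply at all; these lemmas therefore yield no constraint whatsoever in precisely the subcase that matters. (For $W\equiv x^{p}y$ with $p\neq0$, Lemma \ref{lem:1plus}(1) does give $(x^{p}y)^{p}V^{p}=x^{p}y^{p}$, but deducing $p=2$ and $V=y^{-1}x^{-1}y$ from it still requires a root-extraction and length-counting argument in the free group that you do not supply.) What is needed here is a direct cancellation analysis of one of [T], [M], [B] with both normal forms substituted; in the paper this is equation (\ref{eqn:eqn1}) and Claim \ref{claim:eqn}, whose delicate case analysis (forcing $q=-1$, then $p=1$, then $\varepsilon=-1$ and $s=2$, and excluding $s<0$ separately) is precisely what singles out (11) and (12). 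Nothing in your proposal replaces that claim, so the hardest part of the statement remains unproved.
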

\begin{proof}
By Proposition \ref{prop:generic} if $(W,V)$ is not the solution (13), that is, if $(W,V) \neq (y^{s},x^{m})$, then
$W$ and $V$ are written as
\[ W(x,y) \equiv x^{p}y^{\varepsilon} x^{q}, V(x,y) \equiv y^{r}x^{\delta}y^{s}\; \; (pqrs = 0, \;\;\varepsilon, \delta \in \{\pm 1\}) .\]
By considering the dual solution if necessary, we may assume that either $r=0$ or $s=0$. \\

\noindent
\textbf{Case 1:} $r=0$, $s \neq 0$.\\

By [B], we have an equality
\[ (x^{\delta}y^{s})^{\delta} z^{s}= [x^{\delta}(y^{p}z^{\varepsilon}y^{q})^{s}]^{\delta} (y^{\delta}z^{s})^{s}\]

Assume that $\delta= -1$. Then the above equality is reduced to the equality
\[ y^{-s} = (y^{p}z^{\varepsilon}y^{q})^{-s}. \] 
This equation is satisfied only if $s=0$, which is a contradiction.
Therefore $\delta = +1$ and we get an equation
\begin{equation}  
\label{eqn:eqn1}
y^{s} z^{s}= (y^{p}z^{\varepsilon}y^{q})^{s}(yz^{s})^{s}. 
\end{equation}

\begin{claim}
\label{claim:eqn}
If $s \neq 0$, then the equation (\ref{eqn:eqn1}) is satisfied only if $(p,\varepsilon,q,s)=(1,-1,-1,2)$.
\end{claim}
\begin{proof}[Proof of Claim]
First we treat the case $s>0$. Then
\[ y^{s}z^{s} = \overbrace{y^{p} \cdots z^{\varepsilon} \underline{y^{q}}}^{(y^{p}z^{\varepsilon}y^{q})^{s}} \overbrace{\underline{y} z^{s} \cdots}^{(yz^{s})^{s}} \]
In the right side, reducing can happen only at the underlined subword $\underline{y^{q} y}$, hence $q=-1$.
If $s=1$, then the equation (\ref{eqn:eqn1}) is written as $y^{s}z^{s} = y^{p}z^{\varepsilon + s}$, which is impossible since $\varepsilon \neq 0$.
Thus, $s>1$ and the equation (\ref{eqn:eqn1}) is now written as
\[ y^{s}z^{s} = y^{p} \cdots z^{\varepsilon}  \underline{y^{p} y^{-1}} \underline{ z^{\varepsilon}z^{s}} y z^{s} \cdots.\]
A reducing is possible only at the underlined subwords $\underline{y^{p} y^{-1}} $ or $\underline{ z^{\varepsilon}z^{s}}$. Since $s \neq 1$, $z^{\varepsilon +s}$ is non-trivial, so $p=1$.
Finally, the equation (\ref{eqn:eqn1}) is reduced to the equation
\[ y^{s}z^{s} = y z^{s \varepsilon +s} (yz^{s})^{s-1}. \] 
Then $s \varepsilon +s = 0$ hence $\varepsilon = -1$, and $s=2$.

Next we treat the case $s<0$.
Then the equation (\ref{eqn:eqn1}) is written as
\[ y^{s}z^{s} = \overbrace{y^{-q}z^{-\varepsilon} \cdots \underline{y^{-p}}}^{(y^{p}z^{\varepsilon}y^{q})^{s}} \overbrace{z^{-s} y^{-1} \cdots}^{(yz^{s})^{s}}. \]
If $p \neq 0$, then the right side is reduced, which is impossible. Hence $p=0$ and the equation (\ref{eqn:eqn1}) is reduced to
\[ y^{s}z^{s} = y^{-q} \cdots \underline{ z^{-\varepsilon}z^{-s} } y^{-1} \cdots. \]
In the right side, a reducing is possible only at the underlined subword  $\underline{ z^{-\varepsilon}z^{-s} }$, hence $s=-1$ and $\varepsilon = 1$. However, this leads to an equality $ y^{-1}z^{-1} = y^{-q-1}$ which is impossible. Hence there are no solution of the equation (\ref{eqn:eqn1}) for $s<0$.
\end{proof}

By Claim \ref{claim:eqn}, in this case we obtain the solution (11),
\[ W(x,y)=xy^{-1}x^{-1},\;\;\; V(x,y)= xy^{2}.\]

\noindent
\textbf{Case 2:} $r \neq 0$, $s = 0$.\\

By the similar arguments as Case 1, we conclude $\delta=+1$ and by [B] we get an equation
\[ z^{r}y^{r}=(z^{r}y)^{r}(y^{p}z^{\varepsilon}y^{q})^{r} .\]
As in Claim \ref{claim:eqn}, this equality holds only if $r=0$ or $(p,\varepsilon,q,r)= (-1,-1,1,2)$. So in this case we obtain the solution (12), 
\[ W(x,y)=x^{-1}y^{-1}x,\;\;\; V(x,y) =y^{2}x. \]

\noindent
\textbf{Case 3: $r=s=0$}.\\

\noindent
{\it Subcase 3-1:} $(\varepsilon,\delta) = (+1,+1)$.\\

By Lemma \ref{lem:1plus}, $(x^{p}yx^{q})^{p} x^{p} = x^{p}y^{p}$.
This equation is satisfied only if $p=0$ or $p=-q$. If $p=0$, then by direct computation we get $q=0$.  So we obtain the dual of the solution (7),
\[ W(x,y)= x^{p}yx^{-p}, \;\;\; V(x,y)=x. \]
{}

\noindent
{\it Subcase 3-2:} $(\varepsilon,\delta) = (-1,+1)$.\\

By Lemma \ref{lem:1minus}, $(x^{p}y^{-1}x^{q})^{p} x^{-q} = x^{p}y^{-q}$.
This equation is satisfied only if $p=q=0$ or $p=q=1$, so we obtain the dual of the solutions (6) and (8),
\[ W(x,y)=y^{-1},\;  V(x,y)=x\; \textrm{ or } W(x,y)=xy^{-1}x,\;  V(x,y)=x. \]
{}

\noindent
{\it Subcase 3-3:} $(\varepsilon,\delta) = (+1,-1)$.\\

By [M], we have $x^{-q}yx^{-p} = W(x,y)^{-1} = W(x^{-1},y^{-1}) = x^{-p}yx^{-q}$, hence $p=q$.
By Lemma \ref{lem:1plus}, $(x^{p}yx^{p})^{p} x^{-p} = x^{p}y^{p}$.
This equation is satisfied only if $p=1$ or $p=0$. Hence in this case we obtain the solution (6) and the dual of (10),
\[ W(x,y)=  y, \;\;\; V(x,y)=x^{-1}\;  \textrm{ or } W(x,y)=xyx,\;  V(x,y)=x^{-1}. \]
{}

\noindent
{\it Subcase 3-4:} $(\varepsilon,\delta) = (-1,-1)$.\\

By [M], we have $x^{-q}yx^{-p} = W(x,y)^{-1} = W(x^{-1},y^{-1}) = x^{-p}yx^{-q}$, hence $p=q$.
By Lemma \ref{lem:1minus}, $(x^{p}y^{-1}x^{p})^{p} x^{p} = x^{p}y^{-p}$.
This equation is satisfied only if $p=0$, so we obtain the solution (9),
\[ W(x,y)= y^{-1}, \;\;\; V(x,y)=x^{-1}. \]
\end{proof}

Next we study the case that $W(x,y)$ contains no $y^{\pm 1}$ or $V(x,y)$ contains no $x^{\pm1}$,  which was excluded in the previous arguments.
By considering the dual solution if necessary, we may assume that $W(x,y) \equiv x^{m}$ $(m \in \Z)$.
By [T], it is easily checked that $(W \equiv x^{m},V)$ is a solution of FGYBE only if $m= 0, 1$. First we study the case $m=0$.

\begin{prop}
\label{prop:Wisphi}
Let $(W,V)$ be a solution of FGYBE such that $W \equiv 1$. Then $(W,V)$ or its dual is one of the solutions (1)--(3) listed in Theorem \ref{thm:main}.
\end{prop}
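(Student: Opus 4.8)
The plan is to push the hypothesis $W\equiv 1$ through the three conditions and watch almost everything collapse. Since $W$ is the empty word, $W(\,\cdot\,,\cdot\,)=1$ identically, so both sides of [T] and of [M] are empty words and those two conditions hold automatically; the only surviving constraint is [B]. Because the subgroup of $F_2$ generated by $x$ alone is infinite cyclic, the substitution $y\mapsto 1$ gives $V(x,1)=x^{a}$, where $a$ is the exponent sum of $x$ in $V$, and $W(y,z)=1$ turns [B] into the single functional equation
\[ V(V(x,y),z)=V(x^{a},V(y,z)) \]
in $F_{3}$. Note that Lemma \ref{lem:free} is \emph{not} available here, since $W$ contains no $y^{\pm 1}$, so the analysis must be done by hand. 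Abelianizing and comparing the exponent sum of $z$ on the two sides (the left contributes $b$, the right contributes $b^{2}$, where $b$ is the exponent sum of $y$ in $V$) forces $b=b^{2}$, hence $b\in\{0,1\}$. This dichotomy organizes the remainder of the argument.

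For $b=0$ the target is $V\equiv x^{a}$, which is solution (1). If $a\neq 0$, substituting $y\mapsto 1$ into the displayed equation gives $V(x^{a},z)=x^{a^{2}}$; since $x\mapsto x^{a},\ y\mapsto z$ embeds $F_{2}$ into $\langle x,z\rangle$ when $a\neq 0$, the absence of $z$ on the right forces $V$ to contain no $y^{\pm 1}$, so $V\equiv x^{a}$. If $a=0$, the right-hand side collapses to $V(1,V(y,z))=1$, so $V(V(x,y),z)=1$; when $V\not\equiv 1$ the nontrivial element $V(x,y)$ and $z$ freely generate a rank-two free subgroup of $F_{3}=\langle x,y\rangle*\langle z\rangle$, forcing the reduced word $V$ to be empty, a contradiction. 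Either way $V\equiv x^{a}$.

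For $b=1$ the goal is to reach $V\equiv y$ or $V\equiv xy$ (solutions (2) and (3)), the reversed forms being absorbed by passing to the dual solution. I would first specialize $z\mapsto 1$ to obtain $V(x^{a},y^{a})=V(x,y)^{a}$, i.e.\ $\mu_{a}(V)=V^{a}$ for $\mu_{a}\colon x\mapsto x^{a},\,y\mapsto y^{a}$. Extracting the cyclically reduced core of $V$ and counting syllables, exactly in the spirit of Claim \ref{claim:cyc}, shows that for $|a|\ge 2$ the syllable counts of $\mu_{a}(V)$ and of $V^{a}$ cannot agree unless $V$ is a single $x$-syllable, which is incompatible with $b=1$; hence $a\in\{0,1\}$. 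When $a=0$ the equation reads $V(V(x,y),z)=V(y,z)$, and tracking the letters $x^{\pm 1}$ brought in by the substituted copies of $V(x,y)$, which cannot all cancel because the $z^{\pm 1}$'s separating them survive reduction (so Lemma \ref{lem:fund} applies), forces $V$ to contain no $x^{\pm 1}$, giving $V\equiv y$. When $a=1$ the same non-cancellation bookkeeping first reduces $V$ to a single $y$-syllable $x^{p}yx^{q}$, then to $p+q=1$, and finally to $V\equiv xy$.

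The main obstacle is precisely the case $b=1,\ a=1$: here there is no abelianization shortcut, and one must run the careful cancellation analysis that Lemmas \ref{lem:1plus} and \ref{lem:1minus} provide in the generic setting in order to rule out longer words and spurious $y$-syllables. The delicate technical point is to verify that the $z^{\pm 1}$ appearing on each side of [B] is never involved in a reducing operation, since it is exactly this non-cancellation that licenses the application of Lemma \ref{lem:fund} to equate prefixes and read off the exponents.
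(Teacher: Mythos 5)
Your plan is organized quite differently from the paper's proof, and where you actually carry it out it is correct and arguably cleaner: the observation that [T] and [M] are vacuous for $W\equiv 1$, the reduction to the single equation $V(V(x,y),z)=V(x^{a},V(y,z))$, and the abelianization dichotomy $b\in\{0,1\}$ are all valid. Your case $b=0$ is complete (both embedding arguments are sound: $\langle x^{a},z\rangle$ is free of rank two for $a\neq 0$, and so is $\langle V(x,y),z\rangle$ for $V\neq 1$), and so is the case $(a,b)=(0,1)$. The paper instead splits on whether $V$ is a power of a generator and then compares free-product normal forms in [B] directly via Lemma \ref{lem:fund}; your route gets to the same bottleneck with less case analysis.

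There are, however, two genuine gaps, both in the case $b=1$. First, your length/syllable count for $\mu_{a}(V)=V^{a}$ only excludes $|a|\geq 2$, yet you conclude ``$a\in\{0,1\}$''; the case $a=-1$ is silently dropped. (It can be excluded: $\mu_{-1}(V)=V^{-1}$ forces $V$ to be a palindrome, and a palindrome has at most one odd exponent sum, contradicting $a=-1$, $b=1$; but no such argument appears.) Second, and fatally, the case $(a,b)=(1,1)$ --- the entire substance of the proposition --- is dispatched by assertion: no ``bookkeeping'' is exhibited, and none can yield the stated conclusion $V\equiv xy$, because $V\equiv yx$ passes every test. Indeed, with $W\equiv 1$ the conditions [T] and [M] are vacuous, and [B] reads $V(V(x,y),z)=V(V(x,1),V(y,z))$; for $V\equiv yx$ both sides equal $zyx$, so $(1,yx)$ is a solution of FGYBE. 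Your parenthetical claim that reversed words are ``absorbed by passing to the dual solution'' is false: duality exchanges the roles of $W$ and $V$, so the dual of $(1,xy)$ is $(yx,1)$ and the dual of $(1,yx)$ is $(xy,1)$; neither $(1,yx)$ nor its dual appears in (1)--(3). A correct analysis of this case produces $V\in\{xy,\,yx\}$, not $V\equiv xy$. (This is exactly the spot where the paper's own proof is also too quick: it equates the segments between $z^{a_{1}}$ and $z^{a_{3}}$, which presupposes that $V$ has at least two $y$-syllables; when $V\equiv x^{a_{0}}y^{a_{1}}x^{a_{2}}$ the correct comparison is $V^{a_{2}}=y^{a_{2}}V(x,1)^{a_{2}}$, which $V=yx$ satisfies.) As written, your proposal therefore cannot be completed into a proof of the statement as it stands; it would at best prove the statement with $(1,yx)$ added, or one would need a further symmetry beyond duality (conjugation by $x\mapsto x^{-1}$, $y\mapsto y^{-1}$ carries $(1,yx)$ to $(1,xy)$) that the proposition does not allow.
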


\begin{proof}
First of all assume that $V \equiv x^{m}$ or $V \equiv y^{m}$. Then it is directly checked that $(W,V)$ is a solution of FGYBE if and only if $(W,V) = (1,x^{m})$ $(m \in \Z)$ or $(1, y)$ so we get solutions (1) and (2).

Assume that $V$ contains both $x^{\pm 1}$ and $y^{\pm 1}$ and put
\[ V(x,y) \equiv x^{a_{0}}y^{a_{1}}\cdots x^{a_{2n-2}}y^{a_{2n-1}}x^{a_{2n}} \]
where $a_{1}, a_{2n-1} \neq 0$.

First consider the case $a_{1}>0$. Then by [B],
\[ V(x,y)^{a_{0}}z^{a_{1}}V(x,y)^{a_{2}}z^{a_{3}} \cdots =  V(x, 1)^{a_{0}}y^{a_{0}}z^{a_{1}}y^{a_{2}}z^{a_{3}}\cdots \]
The subwords $z^{a_{1}}$ and $z^{a_{3}}$ are not canceled, so $V(x,y)^{a_{0}} = V(x,1)^{a_{0}}y^{a_{0}}$ and $V(x,y)^{a_{2}} = y^{a_{2}}$. If $a_{2} \neq 0$, then the equality  $V(x,y)^{a_{2}} = y^{a_{2}}$ means that $V(x,y) = y$, which is a contradiction. Therefore $a_{2}=\cdots =a_{2n} = 0$.
Moreover,  the equality $V(x,y)^{a_{0}} = V(x,1)^{a_{0}}y^{a_{0}}$ implies that $V(x,y)$ is primitive, so $a_{0} = 0,\pm 1$.  Since we have assumed that $V$ contains both $x^{\pm 1}$ and $y^{\pm 1}$, $a_{0} \neq 0$.
It is directly checked that $a_{0} = -1$ is impossible. If $a_{0}=1$ then we obtain the solution (3),
\[ W(x,y)= 1,\;\;\; V(x,y) = xy. \]

In a similar way, it is shown that there are no solutions of FGYBE if $a_{1}<0$.
\end{proof}

We complete the proof of Theorem \ref{thm:main} by studying the case $m=1$.

\begin{prop}
Let $(W,V)$ be a solution of FGYBE such that $W \equiv x$. Then $(W,V)$ or its dual is (4) or (5) in Theorem \ref{thm:main}.
\end{prop}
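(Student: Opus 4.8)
The plan is to start from the hypothesis $W \equiv x$, so $W(x,y)=x$, and exploit the three equations [T], [M], [B] to constrain $V$. The key simplification is that $W(x,y)=x$ makes the left-hand compositions collapse dramatically: substituting $W=x$ into [T] gives a tautology, so [T] carries no information. The real content sits in [M] and [B]. First I would write out [M] and [B] under the substitution $W(a,b)=a$: [M] becomes $V(x, V(x,y)) = V(V(x, y), V(y,z))$-type identity, and [B] becomes $V(V(x,y),z) = V(V(x,y), V(y,z))$. One must be careful expanding these because inside $V(y,z)$ we still have the full word; I would compute $W(x,W(y,z)) = x$ and $W(V(x,y),z)=V(x,y)$ and substitute these into the displayed forms of [M] and [B] from the introduction.

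**Reducing to a functional equation for $V$.** After substitution, [B] reads $V(V(x,y),z) = V(V(x,W(y,z)),V(y,z))$ with $W(y,z)=y$, hence $V(V(x,y),z)=V(V(x,y),V(y,z))$. Comparing the two sides as reduced words and using Lemma~\ref{lem:free} (since the hypothesis of this proposition presumably still forces $V$ to contain $x^{\pm1}$, so that $W=x$ and $V$ generate a rank-two free group), I would argue that the second argument must match: $z = V(y,z)$ as an identity in $F_3$, forcing $V(y,z)=z$, i.e. $V(x,y)=y$. That yields solution (5), $(W,V)=(x,y)$. The complementary branch is when $V$ does \emph{not} contain $y^{\pm1}$, i.e. $V\equiv x^{m}$; then $(W,V)=(x,x^{m})$, which is solution (4). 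The main step is to rule out every other possibility for $V$ by the non-cancellation argument: I would write $V(x,y)\equiv x^{a_0}y^{a_1}\cdots$ and track, via Lemma~\ref{lem:fund}, which subwords $z^{a_i}$ survive reduction on each side of [B], exactly as in the proof of Proposition~\ref{prop:Wisphi}.

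**The main obstacle.** The hard part will be handling the case where $V$ contains $y^{\pm1}$ but the leading/trailing exponent patterns allow nontrivial cancellation, so that a naive comparison of reduced words fails. The argument must establish that the subwords coming from the $z$-variable in [B] are \emph{not canceled}, which is precisely what Lemma~\ref{lem:free} guarantees: since $\{W=x, V\}$ freely generate, any reduced word $R(W,V)$ is nontrivial in $F_2$ and hence cannot collapse to the empty word when re-expanded on $\{x^{\pm1},y^{\pm1}\}$. I expect the delicate bookkeeping to be showing that $V(x,y)^{a_i}=y^{a_i}$ for all the ``interior'' blocks $i$, which forces $V(x,y)=y$ unless all those exponents vanish; combined with the primitivity constraint on the leading block (as in Proposition~\ref{prop:Wisphi}), this pins down $V\equiv y$ or $V\equiv x^{m}$. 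Once those two survive, a direct check that each indeed satisfies [M] and [B] completes the classification, giving exactly solutions (4) and (5) and finishing the proof of Theorem~\ref{thm:main}.
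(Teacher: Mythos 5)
Your overall skeleton---[T] is vacuous when $W\equiv x$, the real content is [B], which becomes $V(V(x,y),z)=V(V(x,y),V(y,z))$, and one should then write $V\equiv x^{a_0}y^{a_1}\cdots$ and compare the two sides via Lemma~\ref{lem:fund}, mirroring Proposition~\ref{prop:Wisphi}---is exactly the paper's strategy. But both justifications you offer for the decisive step are wrong. First, Lemma~\ref{lem:free} cannot be invoked here: its hypothesis requires $W$ to contain at least one $y^{\pm 1}$, and $W\equiv x$ contains none (also, this proposition carries no hypothesis that $V$ contains an $x^{\pm 1}$; the case $V\equiv y^{m}$ must be handled, not assumed away). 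What the non-cancellation argument actually needs is only the elementary fact that nonzero powers of the nontrivial reduced words $V(x,y)$ and $V(y,z)$ do not reduce to the empty word; the paper's proof of this proposition never uses Lemma~\ref{lem:free} at all. Second, the shortcut ``the second arguments must match, so $z=V(y,z)$'' is not a valid inference in a free group: the map $t\mapsto V(A,t)$ need not be injective. For instance, for $V(x,y)\equiv yxy^{-1}$ one has $V(A,t)=tAt^{-1}$, and $V(A,t_1)=V(A,t_2)$ whenever $t_2^{-1}t_1$ commutes with $A$; so one cannot simply cancel $V(V(x,y),\,\cdot\,)$ from both sides. (Incidentally, under $W\equiv x$ the condition [M] reads $V(x,V(x,y))=V(x,y)$, not the identity you wrote, though your argument never uses [M].)

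The bookkeeping you defer to also does not come out the way you anticipate. For $a_1>0$, [B] reads $V(x,y)^{a_0}z^{a_1}V(x,y)^{a_2}z^{a_3}\cdots = V(x,y)^{a_0}y^{a_0}z^{a_1}y^{a_2}z^{a_3}\cdots$, and since the $z$-blocks survive reduction, Lemma~\ref{lem:fund} gives $V(x,y)^{a_0}=V(x,y)^{a_0}y^{a_0}$ and $V(x,y)^{a_2}=y^{a_2}$, etc. The first equation forces $a_0=0$ outright; there is no ``primitivity constraint on the leading block'' in this case. That phenomenon belongs to Proposition~\ref{prop:Wisphi}, where $W\equiv 1$ makes the leading equation $V(x,y)^{a_0}=V(x,1)^{a_0}y^{a_0}$, so that $a_0=\pm 1$ survives and produces the extra solution $(1,xy)$. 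Here the conclusion is $a_0=a_2=\cdots=a_{2n}=0$, contradicting the assumption that $V$ contains an $x^{\pm 1}$: a $V$ containing both letters admits no solution at all, and in particular no analogue of solution (3) appears. The surviving candidates are then exactly $V\equiv x^{m}$ and $V\equiv y^{m}$, finished by direct verification (for $V\equiv y^{m}$, [B] gives $m=m^{2}$, so $m\in\{0,1\}$, yielding (4) with $m=0$ and (5)). Your plan would likely land here once the two faulty justifications are replaced, but as written the proof has genuine gaps at precisely the steps that carry the argument.
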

\begin{proof}
The proof is almost the same as the proof of Proposition \ref{prop:Wisphi}.
Assume that $V \equiv x^{m}$ or $V \equiv y^{m}$. Then it is directly checked that $(W,V)$ is a solution of FGYBE if and only if $(W,V) = (x,x^{m})$ $(m \in \Z)$ or $(W,V)= (x,y)$, so we get solutions (4) or (5).

Now we assume that $V$ contains both $x^{\pm 1}$ and $y^{\pm 1}$ and put
\[ V(x,y) \equiv x^{a_{0}}y^{a_{1}}\cdots x^{a_{2n-2}}y^{a_{2n-1}}x^{a_{2n}} \]
where $a_{1}, a_{2n-1} \neq 0$.

First consider the case $a_{1}>0$. Then by [B],
\[ V(x,y)^{a_{0}}z^{a_{1}}V(x,y)^{a_{2}}z^{a_{3}} \cdots =  V(x,y)^{a_{0}}y^{a_{0}}z^{a_{1}}y^{a_{2}}z^{a_{3}}\cdots \]
Since subwords $z^{a_{1}}$ and $z^{a_{3}}$ are not canceled, we have $V(x,y)^{a_{0}} = V(x,y)^{a_{0}}y^{a_{0}}$ and $V(x,y)^{a_{2}} = y^{a_{2}}$. Then by the same argument as Proposition \ref{prop:Wisphi}, we get $a_{0}=0$ and $a_{2}=\cdots =a_{2n} = 0$. Hence there are no solutions of FGYBE in this case.
Similarly, we conclude that there are no solutions of FGYBE in the case $a_{1}<0$.
\end{proof}

\end{document}